\DeclareMathOperator{\C}{\mathcal{C}}
\DeclareMathOperator{\V}{\mathbb{V}}
\newtheorem{theorem}{Theorem}[section]
\newtheorem*{theorem*}{Theorem}
\newtheorem{lemma}[theorem]{Lemma}
\newtheorem{corollary}[theorem]{Corollary}
\newtheorem{definition}[theorem]{Definition}
\newtheorem{proposition}[theorem]{Proposition}
\newtheorem{result}[theorem]{Result}
\newtheorem{example}[theorem]{Example}
\newtheorem{remark}[theorem]{Remark}
\newcommand{\cS}{{\mathcal S}}
\newcommand{\cC}{{\mathcal C}}
\newcommand{\cL}{{\mathcal L}}
\newcommand{\F}{{\mathbb F}}
\newcommand{\la}{\langle}
\newcommand{\ra}{\rangle}
\newcommand{\PG}{\mathrm{PG}}
\newcommand{\G}{\mathrm{\Gamma L}}
\newcommand\qbin[3]{\left[\begin{matrix} #1 \\ #2 \end{matrix} \right]_{#3}}
\title{Generalising the scattered property of subspaces}
\author{Bence Csajb\'ok, Giuseppe Marino, Olga Polverino and Ferdinando Zullo \thanks{
The
research  was supported by
 the Italian National
Group for Algebraic and Geometric Structures and their Applications (GNSAGA
- INdAM). The first author was partially supported by the J\'anos Bolyai
Research Scholarship of the Hungarian Academy of Sciences and by OTKA grants PD 132463 and K 124950.  
The last two authors were supported by the project ``VALERE: VAnviteLli pEr la RicErca" of the University of Campania ``Luigi Vanvitelli''.}}
\begin{document}
\maketitle

\begin{abstract}
Let $V$ be an $r$-dimensional $\F_{q^n}$-vector space. We call an $\F_q$-subspace $U$ of $V$
$h$-scattered if $U$ meets the $h$-dimensional $\F_{q^n}$-subspaces of $V$ in $\F_q$-subspaces of dimension at most $h$. In 2000 Blokhuis and Lavrauw proved that $\dim_{\F_q} U \leq rn/2$ when $U$ is $1$-scattered. Subspaces attaining this bound have been investigated intensively because of their relations with projective two-weight codes and strongly regular graphs. MRD-codes with a maximum idealiser have also been linked to $rn/2$-dimensional $1$-scattered subspaces and to $n$-dimensional $(r-1)$-scattered subspaces.

In this paper we prove the upper bound $rn/(h+1)$ for the dimension of $h$-scattered subspaces, $h>1$, and construct examples with this dimension. We study their intersection numbers with hyperplanes, introduce a duality relation among them, and study the equivalence problem of the corresponding linear sets.
\end{abstract}

\section{Introduction}

Let $V(n,q)$ denote an $n$-dimensional $\F_q$-vector space.
A $t$-spread of $V(n,q)$ is a set $\cS$ of $t$-dimensional $\F_q$-subspaces such that each vector of $V(n,q)\setminus \{{\bf 0}\}$ is contained in exactly one element of $\cS$.
As shown by Segre in \cite{Segre}, a $t$-spread of $V(n,q)$ exists if and only if $t \mid n$.

Let $V$ be an $r$-dimensional $\F_{q^n}$-vector space and let $\cS$ be an $n$-spread of $V$, viewed as an $\F_q$-vector space.
An $\F_q$-subspace $U$ of $V$ is called \emph{scattered} w.r.t. $\cS$ if it meets every element of $\cS$ in an $\F_q$-subspace of dimension at most one, see \cite{BL2000}.
If we consider $V$ as an $rn$-dimensional $\F_q$-vector space, then it
is well-known that the one-dimensional $\F_{q^n}$-subspaces of $V$, viewed as $n$-dimensional $\F_q$-subspaces, form an $n$-spread of $V$. This spread is called the \emph{Desarguesian spread}.
In this paper scattered will always mean scattered w.r.t. the Desarguesian spread.
For such subspaces Blokhuis and Lavrauw showed that their dimension can be bounded by $rn/2$.
After a series of papers, it is now known that when $2 \mid rn$ then there always exist scattered subspaces of this dimension \cite{BBL2000, BGMP2015, BL2000, CSMPZ2016}. 

In this paper we introduce and study the following special class of scattered subspaces.

\begin{definition}
	Let $V$ be an $r$-dimensional $\F_{q^n}$-vector space.
	An $\F_q$-subspace $U$ of $V$ is called $h$-scattered, $0<h \leq r-1$, if $\la U \ra_{\F_{q^n}}=V$ and each $h$-dimensional $\F_{q^n}$-subspace of $V$ meets $U$ in an $\F_q$-subspace of dimension at most $h$. An $h$-scattered subspace of highest possible dimension is called a maximum $h$-scattered subspace.
\end{definition}

With this definition, the $1$-scattered subspaces are the scattered subspaces generating $V$ over $\F_{q^n}$. With $h=r$ the above definition would give the $n$-dimensional $\F_q$-subspaces of $V$ defining subgeometries of $\PG(V,\F_{q^n})$.
If $h=r-1$ and $\dim_{\F_q} U=n$, then $U$ defines a scattered $\F_q$-linear set with respect to hyperplanes, introduced in \cite[Definition 14]{ShVdV}.
A further generalisation of the concept of $h$-scattered subspaces can be found in the recent paper \cite{BaCsMT2020}.

In this paper we prove that for an $h$-scattered subspace $U$ of $V(r,q^n)$, if $U$ does not define a subgeometry, then

\begin{equation}\label{hscatbound}
\dim_{\F_q} U \leq \frac{rn}{h+1},
\end{equation}
cf.\ Theorem \ref{bound}. Clearly, $h$-scattered subspaces reaching bound \eqref{hscatbound} are maximum $h$-scattered. 
When $h+1 \mid r$ then our examples prove that maximum $h$-scattered subspaces have dimension $rn/(h+1)$, cf.\ Theorem \ref{dirsum}.
In Theorem \ref{maininter} we show that $h$-scattered subspaces of dimension $rn/(h+1)$ meet hyperplanes of $V(r,q^n)$ in $\F_q$-subspaces of dimension at least $rn/(h+1)-n$ and at most $rn/(h+1)-n+h$. Then we introduce a duality relation between maximum $h$-scattered subspaces of $V(r,q^n)$ reaching bound \eqref{hscatbound} and maximum $(n-h-2)$-scattered subspaces of $V(rn/(h+1)-r,q^n)$ reaching bound \eqref{hscatbound}, which allows us to give some constructions also when $h+1$ is not a divisor of $r$, cf. Theorem \ref{newex}.

Proposition \ref{scatole} shows us that $h$-scattered subspaces are special classes of $1$-scattered subspaces.
In \cite[Corollary 4.4]{ShVdV} the $(r-1)$-scattered subspaces of $V(r,q^n)$ attaining bound \eqref{hscatbound}, i.e. of dimension $n$, have been shown to be equivalent to MRD-codes of $\F_{q}^{n\times n}$ with minimum rank distance $n-r+1$ and with left or right idealiser isomorphic to $\F_{q^n}$. In Section \ref{sec:linearsets} we study the $\F_q$-linear set $L_U$  determined by an $h$-scattered subspace $U$.  In contrast to the case of $1$-scattered subspaces, it turns out that for any $h$-scattered $\F_q$-subspaces $U$ and $W$ of $V(r,q^n)$ with $h>1$, the corresponding linear sets $L_U$ and $L_W$ are $\mathrm{P}\Gamma\mathrm{L}(r,q^n)$-equivalent if and only if $U$ and $W$ are $\Gamma\mathrm{L}(r,q^n)$-equivalent, cf.\ Theorem \ref{simplicitythm}.
For $r>2$ this result extends \cite[Proposition 3.5]{ShVdV} regarding the equivalence between MRD-codes and maximum $(r-1)$-scattered subspaces attaining bound \eqref{hscatbound} into an equivalence between MRD-codes and the corresponding linear sets, see \cite[Remarks 4, 5]{ShVdV}.

\section{\texorpdfstring{{The maximum dimension of an $h$-scattered subspace}}{The maximum dimension of an h-scattered subspace}}

We start this section by the following result.

\begin{proposition}
		\label{scatole}
		For $h>1$ the $h$-scattered subspaces are also $i$-scattered for any $i<h$. In particular they are all $1$-scattered.
\end{proposition}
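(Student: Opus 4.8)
Since the spanning requirement $\la U \ra_{\F_{q^n}} = V$ appears verbatim in the definition of both $h$-scattered and $i$-scattered subspaces, it already holds, and the only thing left to verify is the intersection condition: every $i$-dimensional $\F_{q^n}$-subspace meets $U$ in an $\F_q$-subspace of dimension at most $i$, for each $i$ with $1 \leq i < h$. The plan is to argue by contradiction and to reduce the $i$-dimensional situation to the $h$-dimensional one that the hypothesis controls.

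Assume, then, that some $i$-dimensional $\F_{q^n}$-subspace $W$ satisfies $\dim_{\F_q}(U \cap W) \geq i + 1$. The idea is to enlarge $W$, one $\F_{q^n}$-dimension at a time, into an $h$-dimensional subspace whose intersection with $U$ is forced to be too large, contradicting the $h$-scattered property. Concretely, I would build a chain $W = W_i \subsetneq W_{i+1} \subsetneq \cdots \subsetneq W_h$ of $\F_{q^n}$-subspaces with $\dim_{\F_{q^n}} W_j = j$ and $\dim_{\F_q}(U \cap W_j) \geq j + 1$ at every stage.

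For the inductive step, suppose $W_j$ has been built with $j < h$. Since $j < h \leq r - 1$, the subspace $W_j$ is proper in $V$; as $U$ generates $V$ over $\F_{q^n}$, we cannot have $U \subseteq W_j$, so there is a vector $u \in U \setminus W_j$. Set $W_{j+1} := \la W_j, u \ra_{\F_{q^n}}$, which has $\F_{q^n}$-dimension $j+1$. Because $u \in U \cap W_{j+1}$ while $u \notin W_j$, and $W_j$ already contains the $\F_q$-span of $U \cap W_j$, the vector $u$ is $\F_q$-independent from $U \cap W_j$; hence $\dim_{\F_q}(U \cap W_{j+1}) \geq \dim_{\F_q}(U \cap W_j) + 1 \geq j + 2$. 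After $h - i$ such steps I obtain $W_h$ with $\dim_{\F_{q^n}} W_h = h$ and $\dim_{\F_q}(U \cap W_h) \geq h + 1 > h$, contradicting the assumption that $U$ is $h$-scattered. The special case $i = 1$ gives the last assertion.

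The only delicate point — the main, though mild, obstacle — is ensuring that each adjoined vector genuinely raises the $\F_q$-dimension of the intersection rather than being absorbed into what is already there. This is exactly where the generating hypothesis $\la U \ra_{\F_{q^n}} = V$ is used: it guarantees that the growing subspace, as long as its $\F_{q^n}$-dimension stays below $r$, never swallows all of $U$, so a fresh vector $u \in U$ outside it always exists. Choosing $u$ outside the full $\F_{q^n}$-subspace $W_j$ (not merely outside the $\F_q$-subspace $U \cap W_j$) is what forces $\F_q$-independence from everything accumulated so far, keeping the intersection dimensions in lock-step with the $\F_{q^n}$-dimensions throughout the chain.
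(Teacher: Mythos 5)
Your proof is correct and follows essentially the same approach as the paper: both arguments extend the offending $i$-dimensional subspace by adjoining $h-i$ vectors of $U$ chosen outside the growing $\F_{q^n}$-span (which exist by $\la U\ra_{\F_{q^n}}=V$), forcing an $h$-dimensional $\F_{q^n}$-subspace to meet $U$ in $\F_q$-dimension at least $h+1$. The paper simply performs this extension in one step rather than as an explicit induction, but the underlying idea is identical.
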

\begin{proof}
    Let $U$ be an $h$-scattered subspace of $V$. Suppose to the contrary that it is not $i$-scattered for some $i<h$. Therefore, there exists an $i$-dimensional $\F_{q^n}$-subspace $S$ such that $\dim_{\F_q}(S\cap U)\geq i+1$. As $\la U\ra_{\F_{q^n}}=V$, there exist $\mathbf{u}_1,\ldots,\mathbf{u}_{h-i} \in U$ such that $\dim_{\F_{q^n}}\langle S, \mathbf{u}_1,\ldots,\mathbf{u}_{h-i} \rangle_{\mathbb{F}_{q^n}}=h$. Then \[\dim_{\F_q}\left(U\cap \langle S, \mathbf{u}_1,\ldots,\mathbf{u}_{h-i} \rangle_{\mathbb{F}_{q^n}}\right) \geq (i+1) +(h-i)=h+1,\]
    a contradiction.
\end{proof}

In the proof of the main result of this section we will need the following lemma.

\begin{lemma}
	\label{lemmabound}
For any integer $i$ with $r \leq i \leq n$ in $V=V(r,q^n)$ there exists an $(r-1)$-scattered $\F_q$-subspace of dimension $i$.
\end{lemma}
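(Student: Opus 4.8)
The plan is to give an explicit construction via $\F_q$-linearised polynomials, so that the example works for every $q$ and every admissible $i$ at once. Identify $V=V(r,q^n)$ with $\F_{q^n}^r$, and for an $i$-dimensional $\F_q$-subspace $W\le \F_{q^n}$ (which exists precisely because $i\le n$) put
\[ U=\{(x,x^q,x^{q^2},\ldots,x^{q^{r-1}})\colon x\in W\}. \]
The first coordinate recovers $x$, so $x\mapsto (x,x^q,\ldots,x^{q^{r-1}})$ is an injective $\F_q$-linear map and hence $\dim_{\F_q}U=i$. It then remains to verify the two defining properties of an $(r-1)$-scattered subspace: that $\la U\ra_{\F_{q^n}}=V$, and that every $(r-1)$-dimensional $\F_{q^n}$-subspace (hyperplane) meets $U$ in $\F_q$-dimension at most $r-1$.

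For the spanning property I would invoke the Moore determinant. Since $\dim_{\F_q}W=i\ge r$, I can choose $\F_q$-linearly independent elements $x_1,\ldots,x_r\in W$; the corresponding vectors $v_k=(x_k,x_k^q,\ldots,x_k^{q^{r-1}})$ are the columns of the Moore matrix $\big(x_k^{q^{j}}\big)_{0\le j\le r-1,\,1\le k\le r}$, whose determinant is nonzero exactly because the $x_k$ are $\F_q$-independent. Hence $v_1,\ldots,v_r$ are $\F_{q^n}$-linearly independent and already span $V$, giving $\la U\ra_{\F_{q^n}}=V$. This step is the only place where the hypothesis $i\ge r$ is used.

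For the scattered property, write a hyperplane as the kernel of a nonzero functional $(y_0,\ldots,y_{r-1})\mapsto\sum_{j=0}^{r-1}a_jy_j$. Its intersection with $U$ is carried bijectively by the first coordinate onto $\{x\in W\colon \varphi(x)=0\}$, where $\varphi(x)=\sum_{j=0}^{r-1}a_jx^{q^j}$. As $(a_0,\ldots,a_{r-1})\ne \mathbf 0$, the polynomial $\varphi$ is a nonzero $q$-polynomial of $q$-degree at most $r-1$, so its root set in $\F_{q^n}$ is an $\F_q$-subspace of dimension at most $r-1$; intersecting with $W$ can only decrease the dimension, whence $\dim_{\F_q}(H\cap U)\le r-1$.

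There is essentially no serious obstacle here: the construction is uniform in $q$, and the two nonformal inputs are classical — the nonvanishing of the Moore determinant on $\F_q$-independent arguments, used for spanning, and the fact that the kernel of a nonzero $q$-polynomial of $q$-degree $m$ is an $\F_q$-subspace of dimension at most $m$, used for the scattered condition. The only point meriting care is that the scattered property is inherited for free from the maximal example $W=\F_{q^n}$ upon passing to an arbitrary subspace $W\le \F_{q^n}$, since hyperplane intersections only shrink; the real content of the hypothesis $r\le i\le n$ is thus that $W$ both fits inside $\F_{q^n}$ (forcing $i\le n$) and is large enough to force spanning (forcing $i\ge r$).
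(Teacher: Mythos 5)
Your proposal is correct and is essentially the paper's own construction: both take an $i$-dimensional $\F_q$-subspace of the pseudoregulus-type subspace $\{(x,x^q,\ldots,x^{q^{r-1}}) : x\in\F_{q^n}\}$ and bound hyperplane intersections by the number of roots of the nonzero $q$-polynomial $\sum_{j}a_jx^{q^j}$, which is at most $q^{r-1}$. The only cosmetic difference is the spanning step: you invoke the nonvanishing of the Moore determinant (using $i\geq r$ to pick $r$ independent elements of $W$), whereas the paper gets spanning for free from the same degree bound, since a hyperplane containing $W$ would meet it in $q^i>q^{r-1}$ vectors, a contradiction.
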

\begin{proof}
	Fix an $\F_{q^n}$-basis of $V$, then the space $V$ can be seen as $\F_{q^n}^r$. Consider the $n$-dimensional $\F_q$-subspace $U=\{(x,x^q,\ldots,x^{q^{r-1}}) : x\in \F_{q^n}\}$ of $V$. Let $W$ be any $i$-dimensional $\F_q$-subspace of $U$.
	The intersection of $W$ with a hyperplane $[a_0,a_1,\ldots,a_{r-1}]$ of $V$ is
	\[\left\{(x,x^q,\ldots, x^{q^{r-1}}) : x\in \F_{q^n},\, \sum_{j=0}^{r-1} a_j x^{q^j}=0\right\} \cap W,\]
	which is clearly an $\F_q$-subspace of size at most $\deg \sum_{j=0}^{r-1} a_j x^{q^j}\leq q^{r-1}$. 
	If $\la W \ra_{\F_{q^n}}\neq V$ then there was a hyperplane of $V$ containing $W$, a contradiction,
	i.e. $W$ is an $(r-1)$-scattered $\F_q$-subspace of $V$. 
\end{proof}

For $h=1$, the following result was shown in \cite{BL2000}.

\begin{theorem}\label{bound}
	Let $V$ be an $r$-dimensional $\F_{q^n}$-vector space and $U$ an $h$-scattered $\F_q$-subspace of $V$. Then either
	\begin{itemize}
		\item $\dim_{\F_q} U=r$, $U$ defines a subgeometry of $\PG(V,\F_{q^n})$ and $U$ is  $(r-1)$-scattered, or
		\item $\dim_{\F_q} U \leq rn/(h+1)$.
	\end{itemize}
\end{theorem}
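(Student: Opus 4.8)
The first step is to dispose of the degenerate case. Since $\la U\ra_{\F_{q^n}}=V$ we always have $\dim_{\F_q}U\ge r$, with equality exactly when an $\F_q$-basis of $U$ is an $\F_{q^n}$-basis of $V$; in that case $U$ defines the canonical subgeometry $\PG(r-1,q)\subseteq\PG(V,\F_{q^n})$. For such a $U$ I would verify the $(r-1)$-scattered property directly: in subgeometry coordinates a hyperplane is $\{\sum_i a_iX_i=0\}$ with $(a_i)\neq\mathbf{0}$, and the $\F_q$-linear functional $v\mapsto\sum_i a_iv_i$ restricted to $\F_q^r$ has image $\la a_0,\dots,a_{r-1}\ra_{\F_q}$ of $\F_q$-dimension at least one, so its kernel $U\cap S$ has dimension at most $r-1$. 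This settles the first bullet, so from now on I assume $\dim_{\F_q}U\ge r+1$ and aim at the second.

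It is convenient to rewrite the target inequality $\dim_{\F_q}U\le rn/(h+1)$ in the equivalent form $(h+1)\dim_{\F_q}U\le\dim_{\F_q}V=rn$, and to realise the left-hand side as the $\F_q$-dimension of an actual subspace of $V$. For $\mu\in\F_{q^n}$ the set $\mu U=\{\mu u:u\in U\}$ is again an $\F_q$-subspace of the same dimension; more generally, for an $\F_q$-subspace $T=\la\mu_0,\dots,\mu_h\ra_{\F_q}\subseteq\F_{q^n}$ of dimension $h+1$, the product $TU=\sum_{i=0}^h\mu_iU$ satisfies $\dim_{\F_q}(TU)\le(h+1)\dim_{\F_q}U$, with equality precisely when the sum $\mu_0U\oplus\mu_1U\oplus\cdots\oplus\mu_hU$ is direct. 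Thus it suffices to exhibit a single $(h+1)$-dimensional $T$ for which this sum is direct: then $(h+1)\dim_{\F_q}U=\dim_{\F_q}(TU)\le\dim_{\F_q}V=rn$, which is exactly the bound.

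To produce such a $T$ I would argue by induction on $h$, using Proposition \ref{scatole}, which guarantees that $U$ is simultaneously $i$-scattered for every $i\le h$. The key local observation is that a relation $\sum_{i=0}^h\mu_iu_i=\mathbf{0}$ with the $\mu_i$ an $\F_q$-basis of $T$ and the $u_i\in U$ not all zero is an $\F_{q^n}$-linear dependence, so the subspace $S=\la u_0,\dots,u_h\ra_{\F_{q^n}}$ has $\dim_{\F_{q^n}}S\le h$; but then $u_0,\dots,u_h$ are $h+1$ vectors of $U\cap S$, whose $\F_q$-dimension is at most $\dim_{\F_{q^n}}S\le h$ by the corresponding $i$-scattered property. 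The plan is to turn this tension into an extension statement: starting from a direct sum of the first $h$ copies (the $(h-1)$-scattered case), one adjoins a new multiplier $\mu_h$ and shows that it can be chosen so that $\mu_hU$ meets $\mu_0U\oplus\cdots\oplus\mu_{h-1}U$ trivially. The hypothesis $\dim_{\F_q}U>r$ is precisely what keeps this extension possible: for a genuine subgeometry the multiplicative copies eventually fill $\F_{q^n}$ and the chain of independent multipliers stops after $n$ steps, which is why subgeometries must be carved out as the separate first case.

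The step I expect to be the main obstacle is exactly this directness, namely showing that the $h$-scattered condition, fed through all the lower $i$-scattered conditions, rules out every nontrivial relation $\sum_i\mu_iu_i=\mathbf{0}$ for a well-chosen $T$. An equivalent and perhaps more robust way to organise the same computation is via the associated rank-metric code: fixing an $\F_{q^n}$-basis of $V$, the functionals $\ell_a\colon u\mapsto\sum_ja_ju_j$, for $a\in\F_{q^n}^r$, form an $\F_{q^n}$-linear $[\dim_{\F_q}U,\,r]$ code over $\F_{q^n}$, and the $h$-scattered property says exactly that every $(r-h)$-dimensional subcode has rank support of $\F_q$-dimension at least $\dim_{\F_q}U-h$. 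The inequality $(h+1)\dim_{\F_q}U\le rn$ then becomes a Singleton-type bound for the generalised rank weights of this code; the extremal case $h=r-1$ is the classical Delsarte--Singleton (MRD) bound, which already yields $\dim_{\F_q}U\le n$ and recovers the link with MRD-codes mentioned in the introduction.
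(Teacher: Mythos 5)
Your handling of the $\dim_{\F_q}U=r$ case is fine, and your reduction of the second bullet to exhibiting an $(h+1)$-dimensional $T=\la\mu_0,\ldots,\mu_h\ra_{\F_q}\subseteq\F_{q^n}$ with $\mu_0U+\cdots+\mu_hU$ direct is sound. But the proof stops at exactly the decisive point: you yourself call the directness ``the main obstacle'', and what you offer in its place is not a proof. The inductive plan (adjoin one multiplier at a time) is never carried out, and the fallback via rank-metric codes is circular: for the code built from $U$ the length $\dim_{\F_q}U$ exceeds $n$, a regime in which the only off-the-shelf bound is Delsarte's bound on the minimum rank distance (the case $j=1$, i.e.\ $h=r-1$, which is precisely how the paper settles that case); for higher generalised rank weights the inequality you would need is essentially a restatement of the theorem itself, so invoking a ``Singleton-type bound'' without proof or precise citation proves nothing. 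Moreover, your explanation of the dichotomy is off the mark: the hypothesis $\dim_{\F_q}U>r$ plays no role in directness, and subgeometries do not obstruct the extension of multipliers. The only obstruction is the existence of $T$ at all, i.e.\ whether $n\geq h+1$; you never treat the case $n<h+1$, where the bound $rn/(h+1)<r$ genuinely fails (e.g.\ for a subgeometry with $r>n$, which is $(r-1)$-scattered). The paper disposes of that case by noting that $U$ is then $(n-1)$-scattered (Proposition \ref{scatole}) and applying the bound with $h'=n-1$ to force $\dim_{\F_q}U=r$.

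For what it is worth, the gap is closable, and more easily than your plan suggests. Sharpen your ``local observation'' to: any $d\leq h+1$ vectors $w_1,\ldots,w_d\in U$ that are $\F_q$-independent are automatically $\F_{q^n}$-independent; otherwise $S=\la w_1,\ldots,w_d\ra_{\F_{q^n}}$ has dimension $t\leq d-1\leq h$ and, $U$ being $t$-scattered by Proposition \ref{scatole}, $\dim_{\F_q}(U\cap S)\leq t<d$, a contradiction. Now suppose $\sum_i\mu_iu_i=\mathbf{0}$ with not all $u_i$ zero: pick a maximal $\F_q$-independent subset among the nonzero $u_i$ (at most $h+1$ of them, hence $\F_{q^n}$-independent by the above), write the remaining $u_i$ over it with coefficients $c_{ij}\in\F_q$, and collect terms; the $\F_{q^n}$-independence forces each coefficient $\sum_i c_{ij}\mu_i$ to vanish, and the $\F_q$-independence of the $\mu_i$ then forces all $c_{ij}=0$, which is absurd. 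So the sum is direct for \emph{every} $(h+1)$-dimensional $T$ --- no induction and no choice of $T$ needed --- and $(h+1)\dim_{\F_q}U\leq rn$ follows whenever $n\geq h+1$. Completed this way, your route is genuinely different from (and shorter than) the paper's, which for $1<h<r-1$ counts the maps $G\circ\tau_{\mathbf{u}_1,\ldots,\mathbf{u}_h}$ from an auxiliary $(h-1)$-scattered subspace of $\F_{q^n}^h$ of dimension $h+1$ (Lemma \ref{lemmabound}) into the image of a map $G$ with kernel $U$, and for $h=r-1$ uses Delsarte's bound on $\cC_U$.
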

\begin{proof}
Let $k$ denote the dimension of $U$ over $\F_q$. Since $\langle U \rangle_{\F_{q^n}}=V$, we have $k\geq r$ and in case of equality $U$ defines a subgeometry of $\PG(V,\F_{q^n})$ which is clearly $(r-1)$-scattered.
From now on we may assume $k>r$.
First consider the case $h=r-1$. Fix an $\F_{q^n}$-basis in $V$ and for ${\bf x} \in V$ denote the $i$-th coordinate w.r.t. this basis by $x_i$. Consider the following set of $\F_q$-linear maps from $U$ to $\F_{q^n}$:
\[
\cC_U:=\left\{G_{a_0,\ldots,a_{r-1}}\colon {\bf x}\in U \mapsto \sum_{i=0}^{r-1}a_i x_i : a_i \in \F_{q^n}\right\}.
\]
First we show that the non-zero maps of $\cC_U$ have rank at least $k-r+1$.
Indeed, if $(a_0,\ldots,a_{r-1})\neq\mathbf{0}$, then $\mathbf{u} \in \ker G_{a_0,\ldots,a_{r-1}}$ if and only if $\sum_{i=0}^{r-1} a_iu_i=0$, i.e. $\ker G_{a_0,\ldots,a_{r-1}}= U \cap H$, where $H$ is the hyperplane $[a_0,a_1,\ldots,a_{r-1}]$ of $V$.
Since $U$ is $(r-1)$-scattered, it follows that $\dim_{\F_q} \ker G_{a_0,\ldots,a_{r-1}} \leq r-1$ and hence the rank of $G_{a_0,\ldots,a_{r-1}}$ is at least $k-r+1$. Next we show that any two maps of $\cC_U$ are different.
Suppose to the contrary $G_{a_0,\ldots,a_{r-1}}=G_{b_0,\ldots,b_{r-1}}$, then $G_{a_0-b_0,\ldots,a_{r-1}-b_{r-1}}$ is the zero map.
If $(a_1-b_1,\ldots,a_r-b_r)\neq \mathbf{0}$, then $U$ would be contained in the hyperplane
$[a_0-b_0,a_1-b_1,\ldots,a_{r-1}-b_{r-1}]$, a contradiction since $\la U \ra_{\F_{q^n}}=V$.
Hence, $|\mathcal{C}_U|=q^{nr}$.

Suppose to the contrary $k>n$.
The elements of $\cC_U$ form a $nr$-dimensional $\F_q$-subspace of $\mathrm{Hom}_{\F_q}(U,\F_{q^n})$ and the non-zero maps of $\cC_U$ have rank at least $k-r+1$.
By Result \ref{Dels} (Singleton-like bound) we get
$q^{rn}\leq q^{k(n-k+r)}$ and hence $(k-n)(k-r)\leq 0$, which contradicts $k>r$.

From now on, we will assume $1<h<r-1$, since the assertion has been proved in \cite{BL2000} for $h=1$.
\medskip

First we assume $n\geq h+1$. Then by Lemma \ref{lemmabound}, in $\F_{q^n}^h$ there exists an $(h-1)$-scattered $\F_q$-subspace $W$ of dimension $h+1$.

Let $G$ be an $\F_q$-linear transformation from $V$ to itself with $\ker G=U$.
Clearly, $\dim_{\F_q} \mathrm{Im} \,G = rn-k$.
For each $(\mathbf{u}_1,\ldots,\mathbf{u}_h) \in V^h$ consider the $\F_{q^n}$-linear map
\[ \tau_{\mathbf{u}_1,\ldots,\mathbf{u}_h} \colon (\lambda_1,\ldots,\lambda_h) \in W \mapsto \lambda_1 \mathbf{u}_1+\ldots+\lambda_h \mathbf{u}_h\in V.\]
Consider the following set of $\F_q$-linear maps $W \rightarrow \mathrm{Im}\, G$
\[ \C:=\{G \circ \tau_{\mathbf{u}_1,\ldots,\mathbf{u}_h} : (\mathbf{u}_1,\ldots,\mathbf{u}_h) \in V^h \}.\]
Our aim is to show that these maps are pairwise distinct and hence $|\cC|=q^{rnh}$.
Suppose $G \circ \tau_{\mathbf{u}_1,\ldots,\mathbf{u}_h} =G \circ \tau_{\mathbf{v}_1,\ldots,\mathbf{v}_h}$. It follows that
$ G \circ \tau_{\mathbf{u}_1-\mathbf{v}_1,\ldots,\mathbf{u}_h-\mathbf{v}_h}$
is the zero map, i.e.
	\begin{equation}
	\label{ker}
	\lambda_1 (\mathbf{u}_1-\mathbf{v}_1)+ \ldots + \lambda_h (\mathbf{u}_h-\mathbf{v}_h) \in \ker G=U \mbox{ for each } (\lambda_1,\ldots,\lambda_h) \in W.
	\end{equation}

	\noindent For $i\in \{1,\ldots,h\}$, put ${\mathbf{z}}_i=\mathbf{u}_i-\mathbf{v}_i$, let $T:=\langle \mathbf{z}_1,\ldots,\mathbf{z}_h\rangle_{q^n}$ and let $t=\dim_{q^n}T$. We want to show that $t=0$.
	If $t=h$, then by \eqref{ker}
	\[ \{ \lambda_1\mathbf{z}_1+\ldots+\lambda_h \mathbf{z}_h : (\lambda_1,\ldots,\lambda_h) \in W \}\subseteq T\cap U, \]
	hence $\dim_{\F_q} (T \cap U) \geq \dim_{\F_q} W=h+1$, which is not possible since $T$ is an $h$-dimensional $\F_{q^n}$-subspace of $V$ and $U$ is $h$-scattered.
	Hence $0\leq t<h$. Assume $t\geq 1$. Let $\Phi: \F_{q^n}^h\rightarrow T$ be the $\F_{q^n}$-linear map defined by the rule
	\[(\lambda_1,\ldots,\lambda_h)\mapsto \lambda_1\mathbf{z}_1+\ldots+\lambda_h \mathbf{z}_h\] and consider the map $\tau_{\mathbf{z}_1,\ldots,\mathbf{z}_h}$. Note that $\tau_{\mathbf{z}_1,\ldots,\mathbf{z}_h}$ is the restriction of $\Phi$ on the $\F_q$-vector subspace $W$ of $\F_{q^n}^h$. It can be easily seen that
	\begin{equation}
	\label{eq5}
	\dim_{\F_{q^n}}\ker \Phi =h-t,
	\end{equation}
	\begin{equation}
	\label{eq6}
	\ker \tau_{\mathbf{z}_1,\ldots,\mathbf{z}_h} =\ker \Phi\cap W,
	\end{equation}
	and by \eqref{ker}
	\begin{equation}
	\label{eq6.1}
	\mathrm{Im}\, \tau_{\mathbf{z}_1,\ldots,\mathbf{z}_h} \subseteq T\cap U.
	\end{equation}
	Since $t\geq 1$, by Proposition \ref{scatole} the $\F_q$-subspace $W$ is $(h-t)$-scattered in $\F_{q^n}^h$ and hence taking \eqref{eq5} and \eqref{eq6} into account we get $\dim_{\F_q}\ker \tau_{\mathbf{z}_1,\ldots,\mathbf{z}_h} \leq h-t$, which yields
	\begin{equation}
	\label{eq7}
	\dim_{\F_q}\mathrm{Im}\,  \tau_{\mathbf{z}_1,\ldots,\mathbf{z}_h} \geq t+1.
	\end{equation}
	By Proposition \ref{scatole} the $\F_q$-subspace $U$ is also a $t$-scattered subspace of $V$, thus by \eqref{eq6.1}
	\[
	\dim_{\F_q} \mathrm{Im}\, \tau_{\mathbf{z}_1,\ldots,\mathbf{z}_h} \leq \dim_{\F_q}(T\cap U)\leq t,
	\]
	contradicting \eqref{eq7}. It follows that $t=0$, i.e. $\mathbf{z}_i=0$ for each $i\in\{1,\ldots h\}$ and hence $|\C|=q^{rnh}$.
The trivial upper bound for the size of $\C$ is the size of $\F_q^{(h+1)\times(rn-k)}$, thus
	\[ q^{rnh}=|\C|\leq q^{(h+1)(rn-k)}, \]
	which implies
	\[ k \leq \frac{rn}{h+1}. \]
	
\medskip

Now assume $n < h+1$. By Proposition \ref{scatole} $U$ is $h'$-scattered with $h'=n-1$.
Since $h'<r-1$ and $n\geq h'+1$, we can argue as before and derive $k=\dim_{\F_q} U \leq rn/(h'+1)=r$, contradicting $k>r$.
\end{proof}

The previous proof can be adapted also for the $h=1$ case without introducing the subspace $W$, cf.\ \cite{PhDthesis}.

\medskip


The following result is a generalisation of \cite[Theorem 3.1]{BGMP2015}.

\begin{theorem}\label{sum}
Let $V=V_1\oplus \ldots \oplus V_t$ where $V_i=V(r_i,q^n)$ and $V=V(r,q^n)$. If $U_i$ is an $h_i$-scattered $\F_q$-subspace in $V_i$, then the $\F_q$-subspace $U=U_1\oplus \ldots \oplus U_t $ is $h$-scattered in $V$, with $h=\min\{h_1,\ldots,h_t\}$.
Also, if $U_i$ is $h$-scattered in $V_i$ and its dimension reaches bound \eqref{hscatbound}, then $U$ is $h$-scattered in $V$ and its 
dimension reaches bound \eqref{hscatbound}.
\end{theorem}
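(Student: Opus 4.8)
The plan is to prove the first statement by induction on the number of summands $t$, and to deduce the second from a one-line dimension count. Throughout write $h=\min\{h_1,\dots,h_t\}$. The condition $\la U\ra_{\F_{q^n}}=V$ is immediate: each $U_i$ being $h_i$-scattered gives $\la U_i\ra_{\F_{q^n}}=V_i$, and summing yields $\la U\ra_{\F_{q^n}}=V_1\oplus\cdots\oplus V_t=V$. So the real content is to show that every $h$-dimensional $\F_{q^n}$-subspace $S$ of $V$ satisfies $\dim_{\F_q}(S\cap U)\le h$.

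For the inductive step I set $V'=V_1\oplus\cdots\oplus V_{t-1}$ and $U'=U_1\oplus\cdots\oplus U_{t-1}$, and let $\pi\colon V\to V_t$ be the projection with kernel $V'$; by the inductive hypothesis $U'$ is $h'$-scattered in $V'$ with $h'=\min\{h_1,\dots,h_{t-1}\}\ge h$. Writing $W=S\cap U$, I would split $W$ along $\pi|_W$, obtaining $\dim_{\F_q}W=\dim_{\F_q}(W\cap V')+\dim_{\F_q}\pi(W)$. Each summand is then controlled by a scattered condition: since $U\cap V'=U'$ one has $W\cap V'=(S\cap V')\cap U'$, while $\pi(W)\subseteq\pi(S)\cap U_t$ because $\pi(U)=U_t$.

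The two subspaces $S\cap V'$ and $\pi(S)$, of dimensions $s:=\dim_{\F_{q^n}}(S\cap V')$ and $m:=\dim_{\F_{q^n}}\pi(S)$, are exactly the kernel and image of $\pi|_S$, so rank--nullity gives $s+m=h$. This exactness is the crux, and the reason a single binary split (rather than projecting onto all components simultaneously) is essential: projecting onto every $V_i$ at once only yields the weaker, and in general false, estimate $\dim_{\F_q}(S\cap U)\le\sum_i\dim_{\F_{q^n}}\pi_i(S)$, which can exceed $h$. Now $s\le h\le h'$ and $m\le h\le h_t$, so by Proposition~\ref{scatole} the subspace $U'$ is $s$-scattered and $U_t$ is $m$-scattered (the cases $s,m\in\{0,1\}$ being handled directly from the definition). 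Hence $\dim_{\F_q}(W\cap V')\le s$ and $\dim_{\F_q}\pi(W)\le m$, and therefore $\dim_{\F_q}W\le s+m=h$, completing the induction; the base case $t=1$ is the hypothesis itself.

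For the second assertion, suppose every $U_i$ is $h$-scattered and meets bound \eqref{hscatbound}, i.e.\ $\dim_{\F_q}U_i=r_in/(h+1)$. The first part (with all $h_i=h$) already gives that $U$ is $h$-scattered, and since the sum is direct, $\dim_{\F_q}U=\sum_{i=1}^{t}\dim_{\F_q}U_i=\frac{n}{h+1}\sum_{i=1}^{t}r_i=\frac{rn}{h+1}$, so $U$ attains \eqref{hscatbound} as well. The main obstacle lies entirely in the first part: arranging the projection so that the scattered bounds on the kernel part and the image part add up exactly to $h$ through rank--nullity; once this single-projection split is in place, Proposition~\ref{scatole} supplies the two inequalities and the induction closes.
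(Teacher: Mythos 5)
Your proof is correct, and its core is the same decomposition the paper uses: split the ambient space into $V'=V_1\oplus\cdots\oplus V_{t-1}$ and the last summand, bound the ``kernel part'' by the $s$-scattered property of $U'$ and the ``image part'' by the $m$-scattered property of $U_t$ (both supplied by Proposition \ref{scatole}), and observe that $s+m=h$. The differences are in organization, but they buy something real. The paper reduces to $t=2$ and argues by contradiction: assuming an $h$-dimensional $W$ with $\dim_{\F_q}(W\cap U)\geq h+1$, it applies the Grassmann formula to $\bar U_1=\la U_1, W\cap U\ra_{\F_q}$ and passes to the quotient $V/V_1\cong V_2$ to contradict scatteredness there; moreover, it must dispose of the case $h=1$ separately by invoking \cite[Theorem 3.1]{BGMP2015}, since for $h=1$ the relevant parameter $s<h$ is forced to be $0$ and Proposition \ref{scatole} (stated for positive scattering indices) is no longer available. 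Your direct rank--nullity bookkeeping,
\[
\dim_{\F_q}(S\cap U)=\dim_{\F_q}\bigl((S\cap V')\cap U'\bigr)+\dim_{\F_q}\pi(S\cap U)\leq s+m=h,
\]
is the same computation run forwards rather than as a contradiction, and because the degenerate cases $s,m\in\{0,1\}$ are trivial to check directly (as you note, $s=0$ gives a zero kernel part, and $s=1$ is covered by $1$-scatteredness), it treats $h=1$ uniformly. So your argument makes the theorem self-contained, whereas the paper's proof of the $h=1$ case rests on an external result. Your side remark about why a simultaneous projection onto all $t$ components fails is also accurate: only the kernel--image pair of a single projection has $\F_{q^n}$-dimensions summing exactly to $h$, which is what makes the two scattered bounds add up correctly.
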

\begin{proof}
Clearly, it is enough to prove the assertion for $t=2$.

If $h=1$, the result easily follows from Proposition \ref{scatole} and from \cite[Theorem 3.1]{BGMP2015}; hence, we may assume $h=h_1\geq 2$.

By way of contradiction suppose that there exists an $h$-dimensional $\F_{q^n}$-subspace $W$ of $V$ such that
\begin{equation}
\label{eq1}
\dim_{\F_q}(W\cap U)\geq h+1.
\end{equation}
Clearly, $W$ cannot be contained in $V_1$ since $U_1$ is $h$-scattered in $V_1$.
Let $W_1:=W\cap V_1$ and $s:=\dim_{\F_{q^n}}W_1$. Then $s < h$ and by Proposition \ref{scatole}, the $\F_q$-subspace $U_1$ is $s$-scattered in $V_1$, thus $\dim_{\F_q} (U_1 \cap W_1) \leq s$. Denoting
$\la U_1, W\cap U\ra_{\F_{q}}$ by $\bar U_1$, the Grassmann formula and \eqref{eq1} yield
\begin{equation}\label{eq4}
\dim_{\F_q} \bar{U}_1-\dim_{\F_{q}} U_1 \geq h+1-s.
\end{equation}
Consider the subspace $T:=W+V_1$ of the quotient space $V/V_1\cong V_2$. Then $\dim_{\F_{q^n}}T=h-s$ and $T$ contains the $\F_q$-subspace
\[M:=\bar U_1+V_1.\]
Since $M$ is also contained in the $\F_q$-subspace $U+V_1=U_2+V_1$, then $M$ is $h_2$-scattered in $V/V_1$ and hence by $h-s \leq h \leq h_2$ and by Proposition \ref{scatole}, $M$ is also $(h-s)$-scattered in $V/V_1$.

On the other hand,
\[\dim_{\F_q}(M\cap T)=\dim_{\F_q} M=\dim_{\F_q} \bar U_1-\dim_{\F_q} (\bar U_1\cap V_1)\geq\]
\[\dim_{\F_q} \bar U_1 -\dim_{\F_q}(U\cap V_1)=\dim_{\F_q} \bar U_1 -\dim_{\F_q} U_1,\]
and hence, by \eqref{eq4},
\[\dim_{\F_q}(M\cap T)\geq h-s+1,\]
a contradiction.

The last part follows from $rn /(h+1)=\sum_{i=1}^{t} r_i n /(h+1)$.
\end{proof}

\bigskip

Constructions of maximum $1$-scattered $\F_q$-subspaces of $V(r,q^n)$ exist for all values of $q$, $r$ and $n$, provided $rn$ is even \cite{BBL2000,BGMP2015,BL2000,CSMPZ2016}. For $r=3$, $n\leq 5$ see \cite[Section 5]{BaCsMT2020}. Also, there are constructions of maximum $(r-1)$-scattered $\F_q$-subspaces arising from MRD-codes (explained later in Section \ref{sec:MRD}) for all values of $q$, $r$ and $n$, cf.\ \cite[Corollary 4.4]{ShVdV}. In particular, the so called Gabidulin codes produce Example \ref{Gab}. One can also prove directly that these are maximum $(r-1)$-scattered subspaces by the same arguments as in the proof of Lemma \ref{lemmabound}.

\begin{example}
\label{Gab}
	In $\F_{q^n}^r$, if $n\geq r$, then the $\F_q$-subspace
	\[\{(x,x^q,x^{q^2},\ldots, x^{q^{r-1}}) : x\in \F_{q^n}\}\]
	is maximum $(r-1)$-scattered of dimension $n$.
\end{example}

\begin{theorem}
\label{dirsum}
If $h+1$ divides $r$ and $n\geq h+1$, then in $V=V(r,q^n)$ there exist maximum $h$-scattered $\F_q$-subspaces of dimension $rn/(h+1)$.
\end{theorem}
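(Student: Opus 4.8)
The plan is to realise the desired subspace as a direct sum of maximum $h$-scattered subspaces living in $(h+1)$-dimensional blocks, exploiting the fact that in a space of $\F_{q^n}$-dimension $h+1$ the $h$-scattered condition is exactly the $(\dim-1)$-scattered condition already handled by the Gabidulin-type construction. Since $h+1\mid r$, write $r=(h+1)s$ and fix a decomposition $V=V_1\oplus\ldots\oplus V_s$ with each $V_i\cong V(h+1,q^n)$.

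First I would produce the building blocks. In each $V_i=V(h+1,q^n)$ the hypothesis $n\geq h+1$ allows me to apply Example \ref{Gab} (equivalently Lemma \ref{lemmabound} with $i=n$): the $\F_q$-subspace
\[ U_i=\{(x,x^q,\ldots,x^{q^{h}}) : x\in\F_{q^n}\} \]
is $\big((h+1)-1\big)$-scattered in $V_i$, that is $h$-scattered, and has $\F_q$-dimension $n$. The decisive observation is that in the space $V(h+1,q^n)$ the bound \eqref{hscatbound} reads $(h+1)n/(h+1)=n$, so each $U_i$ attains bound \eqref{hscatbound} inside its own block $V_i$.

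Next I would glue the blocks together using the direct-sum result. Applying the bound-attaining clause of Theorem \ref{sum} (with all $h_i=h$) iteratively over the $s$ summands, the $\F_q$-subspace $U=U_1\oplus\ldots\oplus U_s$ is $h$-scattered in $V$ and attains bound \eqref{hscatbound}; explicitly $\dim_{\F_q}U=sn=rn/(h+1)$. Finally, to confirm maximality I would invoke Theorem \ref{bound}: any $h$-scattered subspace of $V$ either defines a subgeometry, of $\F_q$-dimension $r\leq rn/(h+1)$ (the inequality being exactly $h+1\leq n$), or has dimension at most $rn/(h+1)$. Hence $rn/(h+1)$ is the largest attainable dimension and $U$ is indeed a maximum $h$-scattered subspace.

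Since essentially all of the structural work is carried out by Theorem \ref{sum} and the Gabidulin example, there is no serious analytic obstacle here; the only point demanding care is verifying that each $(h+1)$-dimensional block already meets bound \eqref{hscatbound}, so that the hypotheses of the bound-attaining part of Theorem \ref{sum} are genuinely in force. This is precisely the step where the assumption $n\geq h+1$ is used.
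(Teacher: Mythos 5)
Your proposal is correct and follows essentially the same route as the paper: decompose $V$ into $r/(h+1)$ blocks of $\F_{q^n}$-dimension $h+1$, place the Gabidulin-type subspace of Example \ref{Gab} (which attains bound \eqref{hscatbound} in each block, using $n\geq h+1$) in each summand, and glue via the bound-attaining clause of Theorem \ref{sum}. Your additional appeal to Theorem \ref{bound} to justify the word ``maximum'' is a minor explicit step that the paper leaves implicit.
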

\begin{proof}
Put $r=t(h+1)$ and consider $V=V_1 \oplus \ldots \oplus V_t$,
with $V_i$ an $\F_{q^n}$-subspace of $V$ with dimension $h+1$.
For each $i$ consider a maximum $h$-scattered $\F_q$-subspace $U_i$ in $V_i$ of dimension $n$ which exists because of Example \ref{Gab}. By Theorem \ref{sum}, $U_1\oplus \ldots \oplus U_t$
is an $h$-scattered $\F_q$-subspace of $V$ with dimension $tn=\frac{rn}{h+1}$.
\end{proof}

In Theorem \ref{dirsum} we exhibit examples of maximum $h$-scattered subspaces of $V=V(r,q^n)$ whenever $h+1$ divides $r$. In Section \ref{DD} we introduce a method to construct such subspaces also when $h+1$ does not divide $r$. To do this, we will need an upper bound on the dimension of intersections of hyperplanes of $V$ with a maximum $h$-scattered subspace of dimension $rn/(h+1)$.
The proof of the following theorem is developed in Section \ref{sec:inter}.

\begin{theorem}
	\label{maininter}
	If $U$ is a maximum $h$-scattered $\F_q$-subspace of a vector space $V(r,q^n)$ of dimension $rn/(h+1)$, then for any $(r-1)$-dimensional $\F_{q^n}$-subspace $W$ of $V(r,q^n)$ we have
	\[\frac{rn}{(h+1)}-n\leq \dim_{\F_q}(U \cap W) \leq \frac{rn}{(h+1)}-n+h.\]
\end{theorem}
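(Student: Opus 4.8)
The plan is to prove the two inequalities separately, using dimension counting for the lower bound and the $h$-scattered property together with quotient spaces for the upper bound. Throughout, write $k=\dim_{\F_q} U = rn/(h+1)$.

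For the lower bound, I would use the trivial Grassmann-type estimate. Since $W$ is a hyperplane of $V$, it has $\F_q$-codimension $n$ inside $V$ (as $\dim_{\F_q} V = rn$ and $\dim_{\F_q} W = (r-1)n$). Intersecting $U$ with $W$ can therefore drop the dimension of $U$ by at most $n$, giving
\[
\dim_{\F_q}(U\cap W) \geq \dim_{\F_q} U - n = \frac{rn}{h+1}-n.
\]
This is immediate and requires no use of the scattered property.

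The upper bound is the substantive part. The natural approach is to pass to the quotient $V/S$ where $S$ is a well-chosen $\F_{q^n}$-subspace contained in $W$, and to exploit that the image of $U$ remains scattered in a suitable sense. Concretely, I would argue by contradiction: suppose $\dim_{\F_q}(U\cap W) \geq rn/(h+1)-n+h+1$. The idea is that a large intersection $U\cap W$ forces $U$ to meet some low-dimensional $\F_{q^n}$-subspace too heavily, contradicting $h$-scatteredness. I would try to select inside $W$ an $h$-dimensional $\F_{q^n}$-subspace $T$ such that the induced $\F_q$-dimension of $U\cap T$ exceeds $h$; alternatively, and probably more cleanly, I would consider the quotient map $\pi\colon V\to V/W'$ for an appropriate $(r-h-1)$-dimensional subspace $W'\subseteq W$, so that $\pi(U)$ becomes an $h$-scattered subspace of the $(h+1)$-dimensional quotient $V/W'\cong V(h+1,q^n)$, and then apply the bound $\dim_{\F_q}\pi(U)\leq (h+1)n/(h+1)=n$ coming from Theorem~\ref{bound} to this quotient. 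The hyperplane $W/W'$ of the quotient meets $\pi(U)$ in $\pi(U\cap W)$ plus a controlled error, and comparing $\dim_{\F_q}\pi(U\cap W)$ against the known maximum intersection size of a maximum $h$-scattered subspace of $V(h+1,q^n)$ with its hyperplanes would yield the desired bound $rn/(h+1)-n+h$. The combinatorial identity $\dim_{\F_q}\pi(U\cap W)=\dim_{\F_q}(U\cap W)-\dim_{\F_q}(U\cap W')$ together with the lower bound applied to the quotient ties the two ends together.

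The main obstacle I anticipate is the bookkeeping in the quotient reduction: one must verify that $\pi(U)$ is genuinely $h$-scattered in $V/W'$ (which should follow from the $h$-scattered property of $U$, since $h$-dimensional $\F_{q^n}$-subspaces of the quotient lift to $(r-1)$-dimensional ones through $W'$ meeting $U$ in controlled dimension), and that $\dim_{\F_q}\pi(U)$ actually attains $n$ so that $\pi(U)$ is itself \emph{maximum} $h$-scattered in the quotient. Establishing this maximality — i.e.\ that the reduction does not lose dimension — is where the equality $k=rn/(h+1)$ must be used in an essential way, and it is the step where I would expect the delicate choice of $W'$ to matter. Once $\pi(U)$ is maximum $h$-scattered in $V(h+1,q^n)$, the upper bound reduces to the base case $r=h+1$, where the $\F_q$-codimension of $W/W'$ in $V/W'$ is again $n$, and the two desired inequalities for the quotient follow by the same dimension count as the lower bound, closing the argument.
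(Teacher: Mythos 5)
Your lower bound is correct and coincides with what the paper does: it is the Grassmann estimate $\dim_{\F_q}(U\cap W)\ge\dim_{\F_q}U-n$, phrased in the paper as $h_i=0$ for $i<rn/(h+1)-n$. The upper bound, however, contains a genuine gap, located exactly at the step you flag as the ``main obstacle'', and it cannot be repaired along the lines you suggest. Write $k=rn/(h+1)$, let $W'\subseteq W$ be an $(r-h-1)$-dimensional $\F_{q^n}$-subspace and $\pi\colon V\to V/W'$ the quotient map. Every $h$-dimensional $\F_{q^n}$-subspace of $V/W'$ has the form $T/W'$ with $T$ a hyperplane of $V$ containing $W'$, and since $\ker\pi=W'\subseteq T$ we have $\pi(U)\cap(T/W')=\pi(U\cap T)$, whence
\[
\dim_{\F_q}\bigl(\pi(U)\cap (T/W')\bigr)=\dim_{\F_q}(U\cap T)-\dim_{\F_q}(U\cap W').
\]
Therefore the assertion ``$\pi(U)$ is $h$-scattered in $V/W'$'' is, word for word, the assertion that $\dim_{\F_q}(U\cap T)\le\dim_{\F_q}(U\cap W')+h$ for every hyperplane $T\supseteq W'$; that is a relative form of the very bound you are trying to prove, and your argument would in addition need $\dim_{\F_q}(U\cap W')\le k-n$ to recover the statement of the theorem. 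The hypothesis that $U$ is $h$-scattered only controls intersections with $h$-dimensional $\F_{q^n}$-subspaces of $V$ and says nothing about the $(r-1)$-dimensional lifts $T$; images of $h$-scattered subspaces under quotient maps are in general not $h$-scattered. Nor can the missing bound be supplied by applying Theorem \ref{bound} inside a hyperplane: $U\cap T$ is $h$-scattered in its own $\F_{q^n}$-span (or that span has dimension at most $h$), so this route yields only $\dim_{\F_q}(U\cap T)\le (r-1)n/(h+1)$, which is strictly weaker than $k-n+h$ whenever $n>h+1$. So the proposed reduction to the base case $r=h+1$ is circular.

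The paper's proof avoids any such local reduction and is global in nature. With $s=h+1$, it lets $h_i$ denote the number of hyperplanes meeting $U$ in $\F_q$-dimension $i$, and double counts, for each $0\le j\le h$, the pairs consisting of a hyperplane $H$ and an ordered $(j+1)$-tuple of points of $H\cap L_U$ in general position; Proposition \ref{scatole} guarantees that $\F_q$-independent $(j+1)$-tuples of vectors of $U$ span $(j+1)$-dimensional $\F_{q^n}$-subspaces, which gives the identities of Lemma \ref{kif}. It then shows, via the $q$-binomial theorem and Carlitz's $q$-binomial inversion formula, that the alternating sum $A=\sum_i h_i(q^n-1)(q^i-q^{n(r-s)/s})\cdots(q^i-q^{n(r-s)/s+s-1})$ vanishes. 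Since $h_i=0$ for $i<k-n=n(r-s)/s$ (your lower bound), every summand of $A$ is nonnegative, and any summand with $i>k-n+h$ and $h_i>0$ would be strictly positive; hence $A=0$ forces $h_i=0$ for all $i>k-n+h$, which is the upper bound. If you wish to pursue a quotient-style argument, you would need an independent construction of a subspace $W'\subset W$ satisfying the relative bound above for all hyperplanes through $W'$; no such construction is known, and this is precisely why both this paper and Blokhuis--Lavrauw (the $h=1$ case) resort to counting.
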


\noindent The above theorem is a generalisation of \cite[Theorem 4.2]{BL2000} and the first part of its proof relies on the counting technique developed in \cite[Theorem 4.2]{BL2000}.

\section{\texorpdfstring{Delsarte dual of an $h$-scattered subspace}{Delsarte dual of an h-scattered subspace}}
\label{DD}

Let $U$ be a $k$-dimensional $\F_q$-subspace of a vector space $\Lambda=V(r,q^n)$, with $k>r$. By \cite[Theorems 1, 2]{LuPo2004} (see also \cite[Theorem 1]{LuPoPo2002}), there is an embedding of $\Lambda$ in $\V=V(k,q^n)$ with $\V=\Lambda \oplus \Gamma$ for some $(k-r)$-dimensional $\F_{q^n}$-subspace $\Gamma$ such that
$U=\la W,\Gamma\ra_{\F_{q}}\cap \Lambda$, where $W$ is a $k$-dimensional $\F_q$-subspace of $\V$, $\langle W\rangle_{\F_{q^n}}=\V$ and $W\cap \Gamma=\{{\bf 0}\}$.
Then the quotient space $\V/\Gamma$ is isomorphic to $\Lambda$ and under this isomorphism $U$ is the image of the $\F_q$-subspace $W+\Gamma$ of $\V /\Gamma$.

\medskip

Now, let $\beta'\colon W\times W\rightarrow\F_{q}$ be a non-degenerate reflexive sesquilinear form on $W$ with companion automorphism $\sigma'$. Then $\beta'$ can be extended to a non-degenerate reflexive sesquilinear form $\beta\colon \V\times\V\rightarrow\F_{q^n}$. Indeed if $\{{\bf u}_1,\ldots,{\bf u}_k\}$  is an $\F_q$-basis of $W$, since $\la W\ra_{\F_{q^n}}=\V$, for each ${\bf v}, {\bf w}\in\V$ we have \[\beta({\bf v}, {\bf w})=\sum_{i,j=1}^k a_ib_j^\sigma\beta'({\bf u}_i, {\bf u}_j),\] where ${\bf v}=\sum_{i=1}^ka_i{\bf u}_i$, ${\bf w}=\sum_{i=1}^kb_i{\bf u}_i$ and $\sigma$ is an automorphism of $\F_{q^n}$ such that $\sigma_{|\F_q}=\sigma'$.
Let $\perp$ and $\perp'$ be the orthogonal complement maps defined by $\beta$ and $\beta'$ on the lattice of $\F_{q^n}$-subspaces of $\V$ and of $\F_q$-subspaces of $W$, respectively.
For an $\F_q$-subspace $S$ of $W$ the $\F_{q^n}$-subspace $\la S \ra_{\F_{q^n}}$ of $\V$ will be denoted by $S^*$. In this case $(S^*)^{\perp}=(S^{\perp'})^*$.

\medskip

In this setting, we can prove the following preliminary result.

\begin{proposition}
	\label{prop:dual}
Let $W$, $\Lambda$, $\Gamma$, $\V$, $\perp$ and $\perp'$ be defined as above. If $U$ is a $k$-dimensional $\F_q$-subspace of $\Lambda$ with $k>r$ and

\begin{equation}
\tag{$\diamond$}\label{form:star}
\mbox{$\dim_{\F_q}(M\cap U)<k-1$ holds for each hyperplane $M$ of $\Lambda$,}
\end{equation}

then $W+\Gamma^\perp$ is a $k$-dimensional $\F_q$-subspace of the quotient space $\V/ \Gamma^\perp$. 
\end{proposition}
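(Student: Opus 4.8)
The plan is to reduce the statement to showing that $W\cap\Gamma^\perp=\{\mathbf{0}\}$. Here $W+\Gamma^\perp$ denotes the image of $W$ under the canonical projection $\V\to\V/\Gamma^\perp$, so its $\F_q$-dimension equals $\dim_{\F_q}W-\dim_{\F_q}(W\cap\Gamma^\perp)=k-\dim_{\F_q}(W\cap\Gamma^\perp)$. Since $\beta$ is non-degenerate and $\dim_{\F_{q^n}}\Gamma=k-r$, the subspace $\Gamma^\perp$ is $r$-dimensional over $\F_{q^n}$ and $\V/\Gamma^\perp$ has $\F_q$-dimension $(k-r)n$; thus the claimed conclusion is exactly the assertion $W\cap\Gamma^\perp=\{\mathbf{0}\}$. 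I would prove this by contradiction, assuming there is a nonzero $\mathbf{w}\in W\cap\Gamma^\perp$.

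First I would set up a dictionary between hyperplane sections of $U$ in $\Lambda$ and intersections with $W$ in $\V$. Every hyperplane $M$ of $\Lambda\cong\V/\Gamma$ has the form $M=H/\Gamma$ for a unique $\F_{q^n}$-hyperplane $H$ of $\V$ with $\Gamma\subseteq H$. Using that $U$ is the image of $W+\Gamma$ in $\V/\Gamma$ and that $\Gamma\subseteq H$, the modular (Dedekind) law gives $H\cap(W+\Gamma)=(H\cap W)+\Gamma$, and since $W\cap\Gamma=\{\mathbf{0}\}$ a short dimension count yields $\dim_{\F_q}(M\cap U)=\dim_{\F_q}(H\cap W)$. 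Hence condition \eqref{form:star} translates into the statement that $\dim_{\F_q}(H\cap W)<k-1$ for every $\F_{q^n}$-hyperplane $H$ of $\V$ containing $\Gamma$.

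The key step is to produce, from the assumed vector $\mathbf{w}$, exactly such a hyperplane witnessing $\dim_{\F_q}(H\cap W)=k-1$. Put $S=\langle\mathbf{w}\rangle_{\F_q}\subseteq W$ and set $H:=(S^*)^\perp=\langle\mathbf{w}\rangle_{\F_{q^n}}^\perp$, an $\F_{q^n}$-hyperplane of $\V$. Since $\mathbf{w}\in\Gamma^\perp$ is equivalent to $\Gamma\subseteq\langle\mathbf{w}\rangle_{\F_{q^n}}^\perp=H$, this $H$ indeed contains $\Gamma$. Now the relation $(S^*)^\perp=(S^{\perp'})^*$ recorded before the statement gives $H=\langle S^{\perp'}\rangle_{\F_{q^n}}$, where $S^{\perp'}$ is the $\beta'$-orthogonal complement of $S$ in $W$; as $\beta'$ is non-degenerate, $S^{\perp'}$ is an $\F_q$-hyperplane of $W$, so $S^{\perp'}\subseteq H\cap W$ forces $\dim_{\F_q}(H\cap W)\ge k-1$, while $\langle W\rangle_{\F_{q^n}}=\V\neq H$ forces $\dim_{\F_q}(H\cap W)\le k-1$. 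Thus $\dim_{\F_q}(H\cap W)=k-1$, and by the dictionary the hyperplane $M=H/\Gamma$ of $\Lambda$ satisfies $\dim_{\F_q}(M\cap U)=k-1$, contradicting \eqref{form:star}. Therefore $W\cap\Gamma^\perp=\{\mathbf{0}\}$, as required.

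I expect the main obstacle to be the bookkeeping in the second and third steps: correctly passing between the extended form $\beta$ on $\V$ and its restriction $\beta'$ on $W$ through the identity $(S^*)^\perp=(S^{\perp'})^*$, and cleanly matching hyperplane sections of $U$ with the intersections $H\cap W$ via the modular law together with $W\cap\Gamma=\{\mathbf{0}\}$. Once these two identifications are in place, the contradiction with \eqref{form:star} is immediate.
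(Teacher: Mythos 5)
Your proof is correct and follows essentially the same route as the paper: reduce the claim to $W\cap\Gamma^\perp=\{\mathbf{0}\}$, translate \eqref{form:star} into a condition on hyperplanes of $\V$ containing $\Gamma$, and derive a contradiction from a nonzero $\mathbf{w}\in W\cap\Gamma^\perp$ via the hyperplane $\langle\mathbf{w}\rangle_{\F_{q^n}}^\perp$, which contains $\Gamma$ and meets $W$ in the $(k-1)$-dimensional subspace $\langle\mathbf{w}\rangle_{\F_q}^{\perp'}$. Your write-up merely fills in details the paper leaves implicit (the modular-law dictionary and the two dimension inequalities).
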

\begin{proof}
As described above, $U$ turns out to be isomorphic to the $\F_q$-subspace $W+\Gamma$ of the quotient space $\V/\Gamma$. 
By \eqref{form:star}, since each hyperplane of $\V/\Gamma$ is of form $H+\Gamma$ where $H$ is a hyperplane of $\V$ containing $\Gamma$, it follows that
\begin{small}
\begin{equation}\tag{$\diamond\diamond$}\label{form:starstar}
\mbox{$\dim_{\F_q}(H\cap W)<k-1$ for each hyperplane $H$ of $\V$ containing $\Gamma$. }
\end{equation}
\end{small}
To prove the assertion it is enough to prove
\[
W\cap \Gamma^\perp=\{{\bf 0}\}.
\]
Indeed, by way of contradiction, suppose that there exists a nonzero vector ${\bf v}\in W\cap\Gamma^\perp$. Then the $\F_{q^n}$-hyperplane $\langle {\bf v}\rangle_{\F_{q^n}}^\perp$ of $\V$ contains the subspace $\Gamma$ and meets $W$ in the $(k-1)$-dimensional $\F_q$-subspace $\langle {\bf v}\rangle_{\F_q}^{\perp'}$, which contradicts \eqref{form:starstar}.
\end{proof}

\begin{definition}
\label{deffff}	
\rm
Let $U$ be a $k$-dimensional $\F_q$-subspace of $\Lambda=V(r,q^n)$, with $k>r$ and such that  \eqref{form:star} is satisfied. Then the $k$-dimensional $\F_q$-subspace $W+\Gamma^{\perp}$ of the
quotient space $\V/\Gamma^{\perp}$ (cf.\ Proposition \ref{prop:dual}) will be denoted by $\bar U$ and we call it the \emph{Delsarte dual}{} of $U$ (w.r.t.\,$\perp$).
\end{definition}

The term Delsarte dual comes from the Delsarte dual operation acting on MRD-codes, as pointed out in Theorem \ref{th:termDD}.

\begin{theorem}
\label{thm:dual}
Let $U$ be a maximum $h$-scattered $\F_q$-subspace of a vector space $\Lambda=V(r,q^n)$ of dimension $rn/(h+1)$, with $n\geq h+3$.  Then the  $\F_q$-subspace $\bar U$ of $\V/\Gamma^\perp=V(rn/(h+1)-r,q^n)$ obtained by the procedure of Proposition \ref{prop:dual} is maximum $(n-h-2)$-scattered.
\end{theorem}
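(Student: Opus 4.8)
The plan is to first reduce the statement to proving that $\bar U$ is $(n-h-2)$-scattered, and then to extract this from the tight hyperplane intersection numbers of $U$ furnished by Theorem~\ref{maininter}, read off on the canonical subgeometry $W$.

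\emph{Reduction.} Write $k=\dim_{\F_q}U=rn/(h+1)$ and $r'=\dim_{\F_{q^n}}(\V/\Gamma^\perp)=k-r=r(n-h-1)/(h+1)$. A direct computation gives $r'n/\big((n-h-2)+1\big)=\frac{r(n-h-1)}{h+1}\cdot\frac{n}{n-h-1}=\frac{rn}{h+1}=k=\dim_{\F_q}\bar U$, so bound \eqref{hscatbound} for $(n-h-2)$-scattered subspaces of $V(r',q^n)$ is exactly $k$. Hence, once $\bar U$ is shown to be $(n-h-2)$-scattered it automatically attains \eqref{hscatbound} and is maximum. The hypothesis $n\ge h+3$ guarantees $1\le n-h-2$, while $r\ge h+1$ (as $h\le r-1$) gives $n-h-2\le r'-1$, so $n-h-2$ is a legitimate scattered index; moreover $\la\bar U\ra_{\F_{q^n}}=(\la W\ra_{\F_{q^n}}+\Gamma^\perp)/\Gamma^\perp=\V/\Gamma^\perp$. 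Thus it suffices to prove $(n-h-2)$-scatteredness.

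\emph{Dictionary on $W$.} Since $\dim_{\F_q}W=k$ and $\la W\ra_{\F_{q^n}}=\V=V(k,q^n)$, the subspace $W$ is a canonical subgeometry, so for every $\F_{q^n}$-subspace $X$ of $\V$ we have $\dim_{\F_q}(W\cap X)=\dim_{\F_{q^n}}\la W\cap X\ra_{\F_{q^n}}$; write $w(X)$ for this common value. Using $W\cap\Gamma^\perp=\{{\bf 0}\}$ (Proposition~\ref{prop:dual}) and the modular law, for an $\F_{q^n}$-subspace $R$ of $\V/\Gamma^\perp$ with preimage $H'\supseteq\Gamma^\perp$ one gets $\dim_{\F_q}(\bar U\cap R)=w(H')$; symmetrically $\dim_{\F_q}(U\cap S)=w(H)$ for $S\subseteq\Lambda$ with preimage $H\supseteq\Gamma$. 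Since the hyperplanes of $\Lambda$ correspond to the hyperplanes $H$ of $\V$ with $\Gamma\subseteq H$, Theorem~\ref{maininter} translates into: every hyperplane of $\V$ through $\Gamma$ satisfies $k-n\le w(H)\le k-n+h$. Finally, the polarity identity $(S^*)^\perp=(S^{\perp'})^*$ shows $W\cap X^\perp\subseteq(W\cap X)^{\perp'}$ for all $X$, whence the basic inequality $w(X)+w(X^\perp)\le k$, together with its refinement obtained by intersecting the rational space $\la W\cap X\ra_{\F_{q^n}}^{\perp}$ (of $\F_{q^n}$-dimension $k-w(X)$, again spanned by $W$) with $X^\perp$.

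\emph{Core and main obstacle.} Suppose for contradiction that $\bar U$ is not $(n-h-2)$-scattered: there is an $(n-h-2)$-dimensional $R$ with $\dim_{\F_q}(\bar U\cap R)\ge n-h-1$, i.e.\ a subspace $H'\supseteq\Gamma^\perp$ of $\F_{q^n}$-dimension $(n-h-2)+r'$ with $w(H')\ge n-h-1$. Dualising, $H:=(H')^\perp\subseteq\Gamma$ has $\F_{q^n}$-dimension $r-n+h+2$ and $w(H)=0$. The plan is to feed $H'$ into the weighted-incidence counting underlying Theorem~\ref{maininter}: an oversized $w(H')$ should force, through the subgeometry span identity and the polarity, the existence of a hyperplane of $\V$ through $\Gamma$ whose weight falls outside the admissible window $[\,k-n,\ k-n+h\,]$, contradicting the translated Theorem~\ref{maininter}. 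The principal obstacle is exactly the passage between the two polar families $\{X\supseteq\Gamma^\perp\}$ and $\{X\supseteq\Gamma\}$: because the deficiency $\dim_{\F_{q^n}}X-w(X)$ is \emph{not} self-dual under $\perp$ (a generic hyperplane and its polar point already have different deficiencies), no single dimension formula links $w(H')$ to one weight $w(H)$, so the weights must be controlled in aggregate by a counting identity, and one must check that $n\ge h+3$ keeps every index appearing in the count within its valid range. Carrying out this bookkeeping, and verifying that the extremal weights $k-n$ and $k-n+h$ of $U$ are incompatible with $w(H')\ge n-h-1$, is the heart of the proof.
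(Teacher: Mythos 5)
Your reduction and your dictionary on $W$ are essentially correct (modulo an arithmetic slip: the preimage $H'$ of an $(n-h-2)$-dimensional subspace of $\V/\Gamma^\perp$ has $\F_{q^n}$-dimension $(n-h-2)+r$, because $\dim_{\F_{q^n}}\Gamma^\perp=r$, not $(n-h-2)+r'$; consequently $\dim_{\F_{q^n}}(H')^\perp=k-n-r+h+2$, not $r-n+h+2$), and they match the framework of the paper's proof. But the proposal has a genuine gap precisely at the point you yourself label ``the heart of the proof'': you never derive the contradiction, you only describe the kind of contradiction you hope to reach. Worse, your diagnosis of the obstacle is mistaken. You claim that ``no single dimension formula links $w(H')$ to one weight'' of a hyperplane through $\Gamma$, and that the weights must therefore be controlled ``in aggregate by a counting identity''. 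No aggregate counting is needed: the paper closes the argument with one direct Grassmann computation, routed through an intermediate subspace that your sketch never introduces.

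The missing construction is this. Inside $W\cap H'$, which has $\F_q$-dimension at least $n-h-1$ by assumption, pick an $\F_q$-subspace $S$ with $\dim_{\F_q}S=n-h-1$ and set $S^*=\la S\ra_{\F_{q^n}}$, so $\dim_{\F_{q^n}}S^*=n-h-1$ since $W$ is a canonical subgeometry. The polarity identity $(S^*)^\perp=(S^{\perp'})^*$ (which you state but do not exploit in this direction) gives $W\cap (S^*)^\perp\supseteq S^{\perp'}$, hence $w\bigl((S^*)^\perp\bigr)\geq \dim_{\F_q}S^{\perp'}=k-(n-h-1)=k-n+h+1$. Moreover $S^*\subseteq H'$ and $\Gamma^\perp\subseteq H'$ imply $(H')^\perp\subseteq \Gamma\cap (S^*)^\perp$, so $\dim_{\F_{q^n}}\bigl(\Gamma\cap(S^*)^\perp\bigr)\geq k-(n+r-h-2)$, and therefore
\[
\dim_{\F_{q^n}}\la \Gamma,(S^*)^\perp\ra_{\F_{q^n}}\leq (k-r)+\bigl(k-n+h+1\bigr)-\bigl(k-n-r+h+2\bigr)=k-1.
\]
Thus $\Gamma$ and $(S^*)^\perp$ lie in a common hyperplane $T$ of $\V$ containing $\Gamma$, and $w(T)\geq w\bigl((S^*)^\perp\bigr)\geq k-n+h+1>k-n+h$, contradicting your own translation of Theorem \ref{maininter} to hyperplanes through $\Gamma$. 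This is exactly the ``single dimension formula'' you asserted could not exist; without it (or a substitute for it) your argument does not conclude, so as written the proposal is a correct setup plus an unproved core, not a proof.
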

\begin{proof}
Put $k:=rn/(h+1)$. We first note that condition \eqref{form:star} is satisfied for $U$
since by Theorem \ref{maininter} the hyperplanes of $\Lambda$ meet $U$ in $\F_q$-subspaces of dimension at most $rn/(h+1)-n+h < k-1$. Also, $k>r$ holds since $n\geq h+3$.

Hence we can apply the procedure of Proposition \ref{prop:dual} to obtain the $\F_q$-subspace $\bar U=W+\Gamma^{\perp}$ of $\V/\Gamma^\perp$ of dimension $k$.

By way of contradiction, suppose that there exists an $(n-h-2)$-dimensional $\F_{q^n}$-subspace of $\V/\Gamma^\perp$, say $M$, such that
\begin{equation}\label{form2}
\dim_{\F_q}(M\cap \bar U)\geq n-h-1.
\end{equation}
Then $M=H+\Gamma^{\perp}$, for some $(n+r-h-2)$-dimensional $\F_{q^n}$-subspace $H$ of $\V$ containing $\Gamma^\perp$. For $H$, by \eqref{form2}, it follows that
\[
\dim_{\F_q}(H\cap W)=\dim_{\F_q}(M \cap \bar U)\geq n-h-1.
\]
Let $S$ be an $(n-h-1)$-dimensional $\F_q$-subspace of $W$ contained in $H$ and let $S^*:=\langle S\rangle_{\F_{q^n}}$. Then, $\dim_{\F_{q^n}}S^*=n-h-1$,
\begin{equation}\label{form3}
S^{\perp'}=W\cap (S^*)^\perp \mbox{\quad and \quad}   S^{\perp'}\subset (S^*)^\perp=\langle S^{\perp'}\rangle_{\F_{q^n}}.
\end{equation}
Since $S\subseteq H\cap W$ and $\Gamma^\perp\subset H$, we get $S^*\subset H$ and $H^\perp\subset \Gamma$, i.e.
\begin{equation}\label{form4}
H^\perp\subseteq \Gamma\cap (S^*)^\perp.
\end{equation}
From \eqref{form4} it follows that
\[
\dim_{\F_{q^n}}\left(\Gamma\cap (S^*)^\perp\right)\geq \dim_{\F_{q^n}}H^\perp=k-(n+r-h-2).
\]
This implies that
\[
\dim_{\F_{q^n}}\la\Gamma,(S^*)^\perp\ra_{\F_{q^n}}= \dim_{\F_{q^n}} \Gamma + \dim_{\F_{q^n}} (S^*)^{\perp} - \dim_{\F_{q^n}}\left(\Gamma\cap (S^*)^\perp\right) \leq k-1
\]
and hence $\la \Gamma, (S^*)^\perp \ra_{\F_{q^n}}$ is contained in a hyperplane $T$ of $\V$ containing $\Gamma$. Also, $\dim_{\F_q}(S^{\perp'})=\dim_{\F_q} W-\dim_{\F_q} S=k-(n-h-1)$ and, by \eqref{form3}, we get
\[S^{\perp'}= W\cap (S^*)^\perp\subseteq W\cap T.\]
Then $\hat T:= T \cap \Lambda$ is a hyperplane of $\Lambda$ and, by recalling $U=\la W,\Gamma \ra_{\F_{q}}\cap \Lambda$,
\[\dim_{\F_q}(\hat T \cap U)=\dim_{\F_q}(T \cap W)\geq \dim_{\F_q}(S^{\perp'})=k-n+h+1,\]
contradicting Theorem \ref{maininter}.

\end{proof}

In case of $h=r-1$, Theorem \ref{thm:dual} follows from \cite{ShVdV} and from the theory of MRD codes. 
Our theorem generalises this result to each value of $h$ by using a geometric approach.

\begin{corollary}
	Starting from a maximum $(r-1)$-scattered $\F_q$-subspace $U$ of $V(r,q^n)$ of dimension $n$, $n\geq r+2$, the $\F_q$-subspace $\bar U$ (cf.\ Definition \ref{deffff}) is a maximum $(n-r-1)$-scattered $\F_q$-subspace of $V(n-r,q^n)$ of dimension $n$.
\end{corollary}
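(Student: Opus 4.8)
The plan is to apply Theorem~\ref{thm:dual} directly with the specific value $h=r-1$, and then verify that all hypotheses are met and that the conclusion specialises correctly. First I would check the hypotheses of Theorem~\ref{thm:dual}. We are given a maximum $(r-1)$-scattered $\F_q$-subspace $U$ of $V(r,q^n)$ of dimension $n$. For the dual construction we need $U$ to have dimension $rn/(h+1)$ with $h=r-1$; indeed $rn/(h+1)=rn/r=n$, which matches the given dimension, so $U$ is of the required maximum form. The condition $n\geq h+3$ becomes $n\geq r+2$, which is exactly the hypothesis of the corollary. Thus all hypotheses of Theorem~\ref{thm:dual} are satisfied.

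Next I would read off the conclusion of Theorem~\ref{thm:dual} under these substitutions. The ambient space of $\bar U$ is $\V/\Gamma^\perp=V(rn/(h+1)-r,q^n)$; with $h=r-1$ this dimension is $rn/r-r=n-r$, so $\bar U$ lives in $V(n-r,q^n)$, as claimed. The scattering parameter of $\bar U$ is $n-h-2=n-(r-1)-2=n-r-1$, so $\bar U$ is maximum $(n-r-1)$-scattered. Finally, by the construction in Proposition~\ref{prop:dual} (recorded in Definition~\ref{deffff}), $\bar U$ has dimension $k=rn/(h+1)=n$, giving the stated dimension $n$.

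The proof is therefore a routine specialisation, and there is no genuine obstacle: the entire content has already been established in Theorem~\ref{thm:dual}. The only point requiring a moment's care is arithmetic bookkeeping, namely confirming that the three numerical quantities (the dimension $rn/(h+1)$, the ambient dimension $rn/(h+1)-r$, and the dual scattering index $n-h-2$) all simplify correctly when $h=r-1$, and that the numerical hypothesis $n\geq h+3$ translates exactly into $n\geq r+2$. Since $h+1=r$ divides $r$ trivially and $n\geq r+2\geq r=h+1$, the prerequisites for the existence of the maximum $(r-1)$-scattered subspace $U$ (via Example~\ref{Gab}) are also consistent, though for the corollary itself $U$ is simply taken as given. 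Hence the statement follows immediately from Theorem~\ref{thm:dual}.
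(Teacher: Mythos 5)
Your proposal is correct and matches the paper's intent exactly: the corollary is stated there without proof precisely because it is the specialisation $h=r-1$ of Theorem \ref{thm:dual}, and your arithmetic checks ($rn/(h+1)=n$, ambient dimension $n-r$, scattering index $n-h-2=n-r-1$, and $n\geq h+3$ becoming $n\geq r+2$) are all the verification needed.
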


\begin{corollary}
\label{maxex2}
	Starting from a maximum $1$-scattered $\F_q$-subspace $U$ of $V(r,q^n)$, $rn$ even, $n\geq 4$, $\bar U$ (cf.\ Definition \ref{deffff}) is a maximum $(n-3)$-scattered $\F_q$-subspace of $V(r(n-2)/2,q^n)$ whose dimension attains bound \eqref{hscatbound}.
	\qed
\end{corollary}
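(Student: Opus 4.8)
The plan is to deduce this corollary directly from Theorem \ref{thm:dual} by specialising to $h=1$, so that the only real work is to match up the parameters and to verify that the resulting dimension meets bound \eqref{hscatbound}.

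First I would observe that a maximum $1$-scattered subspace is precisely a maximum $h$-scattered subspace with $h=1$. Indeed, by the Blokhuis--Lavrauw bound, which is the $h=1$ instance of Theorem \ref{bound}, every $1$-scattered subspace of $V(r,q^n)$ has $\F_q$-dimension at most $rn/2$; since $rn$ is even the maximum ones attain this value, so $\dim_{\F_q} U = rn/2 = rn/(h+1)$ with $h=1$. Thus $U$ meets the hypotheses of Theorem \ref{thm:dual} as soon as $n\geq h+3=4$, which is part of the assumption. In particular condition \eqref{form:star} holds (as recorded in the proof of Theorem \ref{thm:dual} via Theorem \ref{maininter}), so the Delsarte dual $\bar U$ of Definition \ref{deffff} is well defined.

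Next I would simply read off the conclusion of Theorem \ref{thm:dual} with $h=1$. The theorem asserts that $\bar U$ is a maximum $(n-h-2)$-scattered subspace, i.e.\ maximum $(n-3)$-scattered, of the quotient space $\V/\Gamma^{\perp}=V\!\left(rn/(h+1)-r,\,q^n\right)$. Substituting $h=1$ gives ambient dimension $rn/2-r=r(n-2)/2$, so $\bar U$ lives in $V(r(n-2)/2,q^n)$, exactly as claimed; here $rn$ even guarantees that $r(n-2)/2$ is an integer.

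Finally I would check that $\dim_{\F_q}\bar U$ attains bound \eqref{hscatbound} in the new space. By the Delsarte dual construction $\dim_{\F_q}\bar U=k=rn/2$. In $V(r',q^n)$ with $r'=r(n-2)/2$ and $h'=n-3$, bound \eqref{hscatbound} reads $\dim_{\F_q}\bar U \leq r'n/(h'+1)=\bigl(r(n-2)/2\bigr)\,n/(n-2)=rn/2$, which is precisely $\dim_{\F_q}\bar U$. Hence $\bar U$ meets the bound, which also re-confirms its maximality. I expect no genuine obstacle here beyond keeping the parameter bookkeeping straight: the entire substance of the argument is already packaged in Theorem \ref{thm:dual}, and the corollary is its $h=1$ specialisation together with the elementary arithmetic that the dimension $rn/2$ equals the right-hand side of \eqref{hscatbound} for the parameters $(r',h')=(r(n-2)/2,\,n-3)$.
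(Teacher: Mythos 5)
Your proposal is correct and matches the paper's intent exactly: the paper states Corollary \ref{maxex2} with no written proof (just \qed), treating it as the immediate $h=1$ specialisation of Theorem \ref{thm:dual}, which is precisely what you carry out. Your parameter bookkeeping — dimension $rn/2$ of $U$, ambient space $V(rn/2-r,q^n)=V(r(n-2)/2,q^n)$, and the check that $r'n/(h'+1)=rn/2$ for $(r',h')=(r(n-2)/2,\,n-3)$ — is all accurate.
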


\begin{theorem}\label{newex}
If $n\geq 4$ is even and $r\geq 3$ is odd, then there exist maximum $(n-3)$-scattered $\F_q$-subspaces of $V(r(n-2)/2,q^n)$ which cannot be obtained from the direct sum construction of Theorem \ref{dirsum}.
\end{theorem}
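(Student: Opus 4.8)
The plan is to produce the desired examples by Delsarte duality and then to rule out the direct sum construction by a divisibility obstruction. First I would check that the hypotheses of Corollary \ref{maxex2} are met for these parameters: since $n$ is even we have $rn$ even, and $n\geq 4$ by assumption. Moreover the input that Corollary \ref{maxex2} requires — a maximum $1$-scattered $\F_q$-subspace $U$ of $V(r,q^n)$ — exists precisely because $rn$ is even, as recalled in the paragraph following Theorem \ref{sum}. Applying Corollary \ref{maxex2} to such a $U$ then yields a maximum $(n-3)$-scattered $\F_q$-subspace $\bar U$ of $V(r(n-2)/2,q^n)$ whose dimension attains bound \eqref{hscatbound} (and which equals $rn/2$). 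This settles the existence part.

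It remains to argue that $\bar U$ is not produced by the construction of Theorem \ref{dirsum}. That construction, applied with $h=n-3$, decomposes the ambient space into $\F_{q^n}$-subspaces each of dimension $h+1=n-2$, and can therefore be set up only in a space $V(r',q^n)$ with $(n-2)\mid r'$. Here $r'=r(n-2)/2$, so
\[
\frac{r'}{h+1}=\frac{r(n-2)/2}{n-2}=\frac{r}{2},
\]
which is not an integer because $r$ is odd. Hence $(n-2)\nmid r(n-2)/2$, so Theorem \ref{dirsum} cannot be applied to $V(r(n-2)/2,q^n)$ at all; in particular the subspaces $\bar U$ just constructed do not arise from it. Combining the two steps completes the argument.

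I expect no deep difficulty here; the point to get right is the bookkeeping of parameters rather than any hard estimate. One must verify that the Delsarte dual lands in exactly $V(r(n-2)/2,q^n)$ with $h=n-3$, and that the divisor demanded by Theorem \ref{dirsum} is $h+1=n-2$, so that the oddness of $r$ furnishes the obstruction. The only place where one might want to do more is in strengthening the conclusion to genuine \emph{indecomposability} — that $\bar U$ is not a direct sum of lower-dimensional maximum $(n-3)$-scattered subspaces with unequal blocks via Theorem \ref{sum}. In that finer analysis each block $V(r_i,q^n)$ hosting a maximum $(n-3)$-scattered subspace attaining \eqref{hscatbound} forces $(n-2)/2\mid r_i$ together with $r_i\geq n-2$, and one would have to show these constraints obstruct a proper decomposition of $r(n-2)/2$; this is where the real work would lie, but it is not needed for the statement as phrased.
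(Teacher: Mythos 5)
Your proof is correct and follows essentially the same route as the paper: existence via Corollary \ref{maxex2} applied to a maximum $1$-scattered $\F_q$-subspace of $V(r,q^n)$ (which exists since $rn$ is even), and the obstruction that $n-2$ does not divide $r(n-2)/2$ because $r$ is odd. Your closing remarks on genuine indecomposability go beyond what the statement requires, exactly as you note, and are not part of the paper's argument either.
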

\begin{proof}
	By \cite{BBL2000, BGMP2015, BL2000, CSMPZ2016} it is always possible to construct maximum $1$-scattered $\F_q$-subspaces of $V(r,q^n)$. Then the result follows from Corollary \ref{maxex2} and from the fact that in this case $n-2$ does not divide $r(n-2)/2$.
\end{proof}

\begin{remark}
{\rm
The Delsarte dual of an $\F_q$-subspace does not depend on the choice of the non-degenerate reflexive sesquilinear form on $W$.

Indeed, fix an $\F_{q}$-basis $B$ of $W$, since $\la W\ra_{\F_{q^n}}=\V$, we can see $W$ as $\F_q^k$ and $\V$ as $\F_{q^n}^k$. Let $\beta_1'$ and $\beta_2'$ be two non-degenerate reflexive sesquilinear forms on $\F_q^k$.
Then, with respect to the basis $B$, the forms $\beta_1'$ and $\beta_2'$ are defined by the following rules:
\[\beta_i'(({\bf x}, {\bf y}))={\bf x}G_i{\bf y}_t^{\rho_i} \quad \footnote{Here ${\bf y}_t$ denotes the transpose of the vector ${\bf y}$. },\]
where $G_i\in\mathrm{GL}(k,q)$ and $\rho_i$ is an automorphism of $\F_{q}$ such that $\rho_i^2=\mathrm{id}$ and $(G_i^{\rho_i})^t=G_i$, for $i\in\{1,2\}$.
Now let $\beta_1$ and $\beta_2$ be their extensions over $\F_{q^n}^k$ defined by the rules
\[\beta_i(({\bf x}, {\bf y}))={\bf x}G_i{\bf y}_t^{\rho_i},\]
and let $\perp_1$ and $\perp_2$ be the orthogonal complement maps defined by $\beta_1$ and $\beta_2$ on the lattice of $\F_{q^n}$-subspaces of $\F_{q^n}^k$, respectively.

Again w.r.t. the basis $B$, the $\F_{q^n}$-subspace $\Gamma$ described at the beginning of this section can be seen as a $(k-r)$-dimensional subspace of $\F_{q^n}^k$. Then, for $i\in\{1,2\}$ we have
\[\Gamma^{\perp_i}=\{{\bf x} : {\bf x}\,G_i\,{\bf y}_t^{\rho_i}=0\ \ \ \forall\, {\bf y}\in\Gamma\}.\]
Straightforward computations show that the invertible semilinear map
\[\varphi\colon  {\bf x}\in\F_{q^n}^k\mapsto {\bf x}^{\rho_2^{-1}\rho_1}G_2^{\rho_2^{-1}\rho_1}G_1^{-1}\in\F_{q^n}^k,\]
leaves $W$ invariant and maps $\Gamma^{\perp_2}$ to $\Gamma^{\perp_1}$. Then $\varphi$ maps $W+\Gamma^{\perp_2}$ to $W+\Gamma^{\perp_1}$, i.e. $\varphi$ maps the Delsarte dual of $U$ calculated w.r.t $\beta_2$ to the Delsarte dual of $U$ calculated w.r.t. $\beta_1$. See also \cite[Section 2]{Ravagnani}  and \cite[Section 6.2]{sheekey_newest_preprint}.
}
\end{remark}

\section{\texorpdfstring{Linear sets defined by $h$-scattered subspaces}{Linear sets defined by h-scattered subspaces}}
\label{sec:linearsets}

Let $V$ be an $r$-dimensional $\F_{q^n}$-vector space. A point set $L$ of $\Lambda=\PG(V,\F_{q^n})\allowbreak=\PG(r-1,q^n)$ is said to be an \emph{$\F_q$-linear set} of $\Lambda$ of rank $k$ if it is defined by the non-zero vectors of a $k$-dimensional $\F_q$-vector subspace $U$ of $V$, i.e.
\[L=L_U:=\{\la {\bf u} \ra_{\mathbb{F}_{q^n}} : {\bf u}\in U\setminus \{{\bf 0} \}\}.\]

One of the most natural questions about linear sets is their equivalence. Two linear sets $L_U$ and $L_W$ of $\PG(r-1,q^n)$ are said to be \emph{$\mathrm{P\Gamma L}$-equivalent} (or simply \emph{equivalent}) if there is an element $\varphi$ in $\mathrm{P\Gamma L}(r,q^n)$ such that $L_U^{\varphi} = L_W$. In the applications it is crucial to have methods to decide whether two linear sets are equivalent or not.  This can be a difficult problem and some results in this direction can be found in \cite{CsMP2, CsMP, CsZ2016}.
For $f\in \mathrm{\Gamma L}(r,q^n)$ we have $L_{U^{f}}=L_{U}^{\varphi_f}$, where $\varphi_f$ denotes the collineation of $\PG(V,\F_{q^n})$ induced by $f$. It follows that if $U$ and $W$ are $\F_q$-subspaces of $V$ belonging to the same orbit of
$\mathrm{\Gamma L}(r,q^n)$, then $L_U$ and $L_W$ are equivalent.
The above condition is only sufficient but not necessary to obtain equivalent linear sets.
This follows also from the fact that $\F_q$-subspaces of $V$ with different dimensions can define the same linear set, for example $\F_q$-linear sets of $\PG(r-1,q^n)$ of rank $k\geq rn-n+1$ are all the same: they coincide with $\PG(r-1,q^n)$.
Also, in \cite{CsMP, CsZ2016} for $r=2$ it was pointed out that there exist maximum $1$-scattered $\F_q$-subspaces of $V$ on different orbits of $\Gamma \mathrm{L}(2,q^n)$ defining $\mathrm{P\Gamma L}$-equivalent linear sets of $\PG(1,q^n)$.
It is then natural to ask for which linear sets can we translate the question of $\mathrm{P\Gamma L}$-equivalence into the question of $\G$-equivalence of the defining subspaces.
For further details on linear sets see \cite{Lavrauw,LVdV2015,Polverino}.

In this section we study the equivalence issue of $\F_q$-linear sets defined by $h$-scattered linear sets for $h\geq 2$.

\begin{definition}
	If $U$ is a (maximum) $h$-scattered $\F_q$-subspace of $V(r,q^n)$, then the $\F_q$-linear set $L_U$ of $\PG(r-1,q^n)$ is called (maximum) $h$-scattered.
\end{definition}

The $(r-1)$-scattered $\F_q$-linear sets of rank $n$ were defined also in \cite[Definition 14]{ShVdV} and following the authors of \cite{ShVdV}, we will call these $\F_q$-linear sets \emph{maximum scattered with respect to hyperplanes}. Also, we will call $2$-scattered $\F_q$-linear sets (of any rank) \emph{scattered with respect to lines}.

\begin{proposition}[{\cite[pg. 3 Eq. (6) and Lemma 2.1]{BoPol}}]
\label{BP0}
Let $V$ be a two-dimensional vector space over $\F_{q^n}$.
\begin{enumerate}
	\item If $U$ is an $\F_q$-subspace of $V$ with $|L_U|=q+1$, then $U$ has dimension $2$ over $\F_q$.
	\item Let $U$ and $W$ be two $\F_q$-subspaces of $V$ with $L_U=L_{W}$ of size $q+1$.
	If $U \cap W\neq \{\mathbf{0}\}$, then $U=W$.
\end{enumerate}
\end{proposition}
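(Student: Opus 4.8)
The plan is to treat the two parts separately, with the first (the bound on $\dim_{\F_q}U$) carrying the bulk of the work and the second following by a rigidity argument.

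\emph{Part (1).} Since $|L_U| = q+1 \ge 2$, the set $L_U$ contains two distinct points, i.e.\ $U$ contains two $\F_{q^n}$-independent vectors, so $\langle U\rangle_{\F_{q^n}} = V$. Write $k := \dim_{\F_q} U$; then $k \ge 2$, and the aim is to exclude $k \ge 3$. For each point $P \in L_U$ set $w_P := \dim_{\F_q}(U \cap P)$, the \emph{weight} of $P$. I would record two facts. First, the nonzero vectors of $U$ are partitioned according to the point of $\PG(1,q^n)$ they span, giving
\[ \sum_{P \in L_U} (q^{w_P}-1) = q^{k}-1, \qquad\text{equivalently}\qquad \sum_{P \in L_U} q^{w_P} = q^{k}+q, \]
there being exactly $q+1$ summands. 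Second, for distinct $P, Q \in L_U$ the subspaces $U \cap P$ and $U\cap Q$ meet only in $\mathbf 0$ (as $P\cap Q=\{\mathbf 0\}$), so $(U\cap P)\oplus(U\cap Q)\subseteq U$ yields the pairwise bound $w_P + w_Q \le k$. Now suppose $k \ge 3$. Ordering the weights decreasingly, the only binding constraint is that the two largest sum to at most $k$; using the convexity of $t \mapsto q^t$ one checks that $\sum_{P} q^{w_P}$ is maximised by taking one weight equal to $k-1$ and the remaining $q$ weights equal to $1$. Hence
\[ \sum_{P \in L_U} q^{w_P} \le q^{k-1}+q^2 \le 2q^{k-1} \le q^{k} < q^{k}+q, \]
the middle inequalities using $q^2 \le q^{k-1}$ (as $k-1\ge 2$) and $q \ge 2$. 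This contradicts the counting identity, so $k = 2$. The identity then forces every weight to equal $1$, i.e.\ $U$ is scattered and meets each of its $q+1$ points in an $\F_q$-line.

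\emph{Part (2).} By (1), $\dim_{\F_q} U = \dim_{\F_q} W = 2$ and both are scattered, all weights being $1$. Pick $\mathbf z \in (U\cap W)\setminus\{\mathbf 0\}$ and let $P_0 := \langle \mathbf z\rangle_{\F_{q^n}} \in L_U = L_W$; since $P_0$ has weight $1$ in both subspaces, $U\cap P_0 = W\cap P_0 = \langle \mathbf z\rangle_{\F_q}$, so $\{\mathbf z, \mathbf u\}$ is an $\F_{q^n}$-basis of $V$ for any $\mathbf u$ with $U = \langle \mathbf z, \mathbf u\rangle_{\F_q}$; I use it as a coordinate frame and write $W = \langle \mathbf z, \mathbf w\rangle_{\F_q}$. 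The points of $L_U$ other than $P_0$ are exactly the $\langle \mathbf u + \lambda \mathbf z\rangle_{\F_{q^n}}$ with $\lambda \in \F_q$. Since $\langle \mathbf w\rangle_{\F_{q^n}} \in L_W = L_U$ and differs from $P_0$, there are $c \in \F_{q^n}^{*}$ and $\lambda_0 \in \F_q$ with $\mathbf w = c(\mathbf u + \lambda_0 \mathbf z)$. It remains to show $c \in \F_q$. The remaining points of $L_W$ are $\langle \mathbf w + \mu \mathbf z\rangle_{\F_{q^n}} = \langle c\,\mathbf u + (c\lambda_0+\mu)\mathbf z\rangle_{\F_{q^n}}$ for $\mu \in \F_q$, and each must coincide with some $\langle \mathbf u + \lambda\mathbf z\rangle_{\F_{q^n}}$; comparing coordinates in the basis $\{\mathbf u, \mathbf z\}$ forces $\mu = c(\lambda - \lambda_0)$. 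As $\lambda$ runs over $\F_q$ the quantity $c(\lambda-\lambda_0)$ sweeps the $\F_q$-line $c\,\F_q$, and this must exhaust $\F_q$; hence $c\,\F_q = \F_q$, i.e.\ $c \in \F_q^{*}$. Then $\mathbf w = c\,\mathbf u + c\lambda_0\mathbf z \in \langle \mathbf u, \mathbf z\rangle_{\F_q} = U$, so $W \subseteq U$ and, by equality of dimensions, $U = W$.

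\emph{Main obstacle.} I expect the heart of the argument to be the weight analysis in (1): combining the exact counting identity with the pairwise bound $w_P + w_Q \le k$ and then carrying out the (easy but necessary) extremal estimate to reach a numerical contradiction for $k \ge 3$. Part (2) is then essentially a scaling computation, whose only subtlety is recognising that it is matching the \emph{full} point set of $L_U$ and $L_W$ — not merely a single shared point — that pins the scalar $c$ down to $\F_q$.
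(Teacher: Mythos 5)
Your proof is correct, but note that the paper itself does not prove this proposition at all: it is imported verbatim from Bonoli--Polverino (the citation ``pg.\ 3 Eq.\ (6) and Lemma 2.1'' of \cite{BoPol}), so there is no in-paper argument to compare against. What you have produced is a self-contained, elementary replacement valid for every $n$. Part (1) combines the standard weight-partition identity $\sum_{P\in L_U}(q^{w_P}-1)=q^k-1$ (this is essentially Eq.\ (6) of \cite{BoPol}) with the pairwise bound $w_P+w_Q\le k$ coming from $(U\cap P)\oplus(U\cap Q)\subseteq U$, and an extremal estimate; Part (2) is a clean frame-and-scaling computation that correctly pins the scalar $c$ to $\F_q$ by matching the full point sets rather than a single point. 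The one place you should tighten is the sentence ``using convexity one checks that the sum is maximised at $(k-1,1,\dots,1)$'': the natural relaxation $w_i\le k-w_1$ for $i\ge 2$ gives the bound $q^{w_1}+q\cdot q^{k-w_1}$, whose maximum over $w_1\in[1,k-1]$ is $q^k+q$, attained at $w_1=1$ --- exactly the target value, so no contradiction there. You must treat $w_1=1$ separately (then all weights are $1$ and the sum is $q^2+q<q^k+q$ for $k\ge 3$) and apply the convexity bound only for $w_1\ge 2$, where it yields $q^{k-1}+q^2\le 2q^{k-1}\le q^k<q^k+q$; also beware that for $k=4$ the configuration with all weights $k/2$ ties the claimed maximiser, so the maximiser is not unique, though the maximal value $q^{k-1}+q^2$ you use is correct. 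With that case distinction made explicit, the argument is complete.
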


\begin{proposition}
\label{dim}
If $L_U$ is a scattered $\F_q$-linear set with respect to lines of $\PG(r-1,q^n)=\PG(V,\F_{q^n})$, then its rank is uniquely defined, i.e. for each $\F_q$-subspace $W$ of $V$ if $L_W=L_U$, then $\dim_{\F_q} W= \dim_{\F_{q}} U$.
\end{proposition}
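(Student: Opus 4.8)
The plan is to show that the hypothesis forces any $W$ with $L_W=L_U$ to be scattered, after which a counting argument finishes the proof. First I would record that, since $U$ is $2$-scattered, by Proposition \ref{scatole} it is also $1$-scattered; hence every point $\langle\mathbf{u}\rangle_{\F_{q^n}}$ of $\PG(r-1,q^n)$ meets $U$ in an $\F_q$-subspace of dimension at most $1$, and for points of $L_U$ in dimension exactly $1$. Consequently the $q^k-1$ nonzero vectors of $U$ (where $k=\dim_{\F_q}U$) split into point-classes of size $q-1$, so $|L_U|=(q^k-1)/(q-1)$. If I can prove that $W$ is scattered as well, then the same count gives $|L_W|=(q^{m}-1)/(q-1)$ with $m=\dim_{\F_q}W$, and $L_U=L_W$ forces $(q^k-1)/(q-1)=(q^m-1)/(q-1)$, i.e.\ $m=k$, which is exactly the claim. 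Thus the whole proof reduces to showing that $W$ has no point of weight greater than $1$.

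Suppose, for a contradiction, that some point $P=\langle\mathbf{u}\rangle_{\F_{q^n}}$ of $L_W$ satisfies $\dim_{\F_q}(W\cap\langle\mathbf{u}\rangle_{\F_{q^n}})\ge 2$. Since $P\in L_W=L_U$ and $U$ is scattered, $\dim_{\F_q}(U\cap\langle\mathbf{u}\rangle_{\F_{q^n}})=1$. Because the definition of $2$-scattered requires $r\ge 3$, we have $|L_U|\ge 2$, so I may choose a second point $Q=\langle\mathbf{v}\rangle_{\F_{q^n}}\in L_U\setminus\{P\}$ and set $S=\langle\mathbf{u},\mathbf{v}\rangle_{\F_{q^n}}$, a $2$-dimensional $\F_{q^n}$-subspace, i.e.\ a line $\ell$ of $\PG(r-1,q^n)$. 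On the $U$ side, $2$-scatteredness gives $\dim_{\F_q}(U\cap S)\le 2$, while $(U\cap\langle\mathbf{u}\rangle_{\F_{q^n}})\oplus(U\cap\langle\mathbf{v}\rangle_{\F_{q^n}})$ is a $2$-dimensional $\F_q$-subspace of $U\cap S$; hence $\dim_{\F_q}(U\cap S)=2$ and $U\cap S$ is not contained in a single point. Therefore $U\cap S$ is scattered on $\ell$ and $|L_{U\cap S}|=q+1$. Using $L_U\cap\ell=L_{U\cap S}$ and the analogous identity for $W$, I obtain $L_{W\cap S}=L_W\cap\ell=L_U\cap\ell=L_{U\cap S}$, a linear set of size $q+1$ on the line $\ell$.

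At this point Proposition \ref{BP0}(1) applies to $W\cap S$: since $|L_{W\cap S}|=q+1$, it yields $\dim_{\F_q}(W\cap S)=2$. But $W\cap\langle\mathbf{u}\rangle_{\F_{q^n}}\subseteq W\cap S$ and the former already has $\F_q$-dimension at least $2$, so $W\cap S=W\cap\langle\mathbf{u}\rangle_{\F_{q^n}}\subseteq\langle\mathbf{u}\rangle_{\F_{q^n}}$; this means $L_{W\cap S}=\{P\}$ is a single point, contradicting $|L_{W\cap S}|=q+1$. Hence $W$ is scattered, and the counting step above completes the proof. The hard part will be the middle step: pinning down that the restriction $L_U\cap\ell$ has size exactly $q+1$ (which is where $2$-scatteredness is essential, through the $2$-dimensional scattered intersection $U\cap S$) and then transporting this value to $W$, so that Proposition \ref{BP0} can convert a size statement back into the dimension statement $\dim_{\F_q}(W\cap S)=2$ that clashes with the assumed heavy point $P$.
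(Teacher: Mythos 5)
Your proof is correct and takes essentially the same approach as the paper's: the same count $|L_U|=(q^k-1)/(q-1)$, the same line through a hypothetical weight-$\ge 2$ point and a second point of $L_U$, the same use of $2$-scatteredness to force the linear set on that line to have size $q+1$, and the same appeal to Proposition~\ref{BP0}(1) for the contradiction. The only difference is organizational: you first prove $W$ must be scattered and then count, whereas the paper counts first and extracts the heavy point from the assumption $\dim_{\F_q}W\ge k+1$; both derivations of the final contradiction are equivalent.
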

\begin{proof}
Let $W$ be an $\F_q$-subspace of $V$ such that $L_U=L_W$ and put $k=\dim_{\F_q} U$.
Since $U$ is a $1$-scattered $\F_q$-subspace (cf.\ Proposition \ref{scatole}), $|L_U|=|L_W|=(q^k-1)/(q-1)$. It follows that $\dim_{\F_q} W \geq k$.
Suppose that $\dim_{\F_q} W \geq k+1$, then there exists at least one point $P=\la {\bf x}\ra_{\F_{q^n}} \in L_W$ such that $\dim_{\F_{q}} (W \cap \la {\bf x} \ra_{\F_{q^n}})\geq 2$.
Let $Q=\la {\bf y}\ra_{\F_{q^n}} \in L_U=L_W$ be a point different from $P$, then
$\la {\bf x},{\bf y}\ra_{\F_{q^n}}\cap W$ has dimension at least $3$ but the
linear set defined by $\la {\bf x},{\bf y}\ra_{\F_{q^n}}\cap W$ is $L_W \cap \la P,Q\ra$, thus it has size $q+1$, contradicting part 1 of Proposition \ref{BP0}.
\end{proof}

\begin{lemma}
\label{lem:simple}
Let $L_U$ be a scattered $\F_q$-linear set with respect to lines in $\PG(r-1,q^n)$.
If $L_U=L_W$ for some $\F_q$-subspace $W$, then $U=\lambda W$ for some $\lambda\in \F_{q^n}^*$.
\end{lemma}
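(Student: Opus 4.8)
The plan is to show that $U$ and $W$ meet every point of the common linear set in the same $\F_q$-line, up to one global scalar, and then reconstruct the subspaces from these lines. First I would record the bookkeeping. By Proposition~\ref{scatole} the subspace $U$ is $1$-scattered, and by Proposition~\ref{dim} we have $\dim_{\F_q}W=\dim_{\F_q}U=:k$; since $|L_W|=|L_U|=(q^k-1)/(q-1)$ is the maximal value for a rank-$k$ linear set, $W$ is $1$-scattered as well. Hence for every point $P\in L_U=L_W$, writing $P$ also for the corresponding $1$-dimensional $\F_{q^n}$-subspace, the intersections $U_P:=U\cap P$ and $W_P:=W\cap P$ are both $1$-dimensional over $\F_q$. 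If $k\le 1$ the statement is trivial, so I assume $L_U$ has at least two points.

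Next I would normalise. Fixing a base point $P_0$, the lines $U_{P_0}$ and $W_{P_0}$ lie on the same $\F_{q^n}$-point $P_0$, so $W_{P_0}=\mu\,U_{P_0}$ for some $\mu\in\F_{q^n}^*$. Replacing $W$ by $\mu^{-1}W$ (which leaves $L_W=L_U$ unchanged) I may assume $U_{P_0}=W_{P_0}$; it then suffices to prove $U=W$, for then the original $W$ satisfies $U=\mu^{-1}W$. I would fix any point $Q\neq P_0$ and pass to the line $\ell=\la P_0,Q\ra$, corresponding to a $2$-dimensional $\F_{q^n}$-subspace $S$. Because $U$ is $2$-scattered we have $\dim_{\F_q}(U\cap S)\le 2$, while $U_{P_0}\oplus U_Q\subseteq U\cap S$ forces equality; as $U$ is $1$-scattered, $U\cap S$ is scattered in $S$, so $L_U\cap\ell=L_{U\cap S}$ has exactly $q+1$ points.

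Finally I would invoke Proposition~\ref{BP0}. Since $L_{W\cap S}=L_U\cap\ell=L_{U\cap S}$ has size $q+1$, part~(1) gives $\dim_{\F_q}(W\cap S)=2$, whence $W\cap S=W_{P_0}\oplus W_Q$. Both $U\cap S$ and $W\cap S$ contain the nonzero subspace $U_{P_0}=W_{P_0}$, so $(U\cap S)\cap(W\cap S)\neq\{\mathbf 0\}$ and part~(2) yields $U\cap S=W\cap S$. Intersecting with $Q$ gives $U_Q=W_Q$. As $Q$ was arbitrary (and $U_{P_0}=W_{P_0}$), the two subspaces meet every point of $L_U$ in the same $\F_q$-line; since $U\setminus\{\mathbf 0\}=\bigsqcup_{P\in L_U}(U_P\setminus\{\mathbf 0\})$ and likewise for $W$, we conclude $U=W$, and hence $U=\mu^{-1}W$ with $\lambda:=\mu^{-1}\in\F_{q^n}^*$.

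The step I expect to be the main obstacle is controlling $\dim_{\F_q}(W\cap S)$: a priori $W$ need not be $2$-scattered, so I cannot bound this intersection directly, and the argument instead extracts the dimension from the point count via Proposition~\ref{BP0}(1). The companion subtlety is arranging the nonzero overlap required by Proposition~\ref{BP0}(2), which is exactly what the initial global rescaling at $P_0$ secures.
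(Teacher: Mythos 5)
Your proof is correct and follows essentially the same route as the paper's: after using Proposition \ref{dim} to equalise dimensions, both arguments rescale $W$ by a scalar so that it agrees with $U$ inside a fixed base point, and then apply Proposition \ref{BP0} (parts 1 and 2) to the rank-two intersections with lines through that base point, using $2$-scatteredness of $U$ to get the $q+1$ point count. The only cosmetic difference is that you conclude point-by-point ($U_Q=W_Q$ for every $Q$, then take the union), while the paper concludes vector-by-vector ($\mathbf{w}\in U$ for every $\mathbf{w}\in \lambda W$, giving $\lambda W\subseteq U$).
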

\begin{proof}
By Proposition \ref{dim}, we have $\dim_{\F_q} W=\dim_{\F_q} U$ and hence, since $U$ is $1$-scattered, also
$W$ is $1$-scattered.
Let $P\in L_U$ with $P=\langle \mathbf{u}\rangle_{\F_{q^n}}$, then for some $\lambda \in \F_{q^n}^*$ we have $\mathbf{u} \in U \cap \lambda W$. Put $W':=\lambda W$ and note that $L_{W}=L_{W'}$.
Our aim is to prove $W' \subseteq U$. Since $U$ and $W'$ are $1$-scattered, we have
$\la {\bf u} \ra_{\F_{q^n}} \cap U=\la {\bf u} \ra_{\F_{q^n}} \cap W'=\la {\bf u}\ra_{\mathbb{F}_q}$.

What is left, is to show for each ${\bf w} \in W'\setminus \la {\bf u}\ra_{\F_{q^n}}$ that ${\bf w}\in U$. To do this, consider the point
$Q=\langle \mathbf{w}\rangle_{\F_{q^n}}\in L_{W'}=L_U$ and the line $\la P, Q\ra$ which meets $L_U$ in  $q+1$ points.
By part 1 of Proposition \ref{BP0}, the $\F_q$-subspace $\left(\langle \mathbf{u},\mathbf{w}\rangle_{\F_{q^n}}\cap U\right)$ has dimension $2$.
Since $\left(\langle \mathbf{u},\mathbf{w}\rangle_{\F_{q^n}}\cap U\right)\cap\left(\langle \mathbf{u},\mathbf{w}\rangle_{\F_{q^n}}\cap W'\right)\neq \{\mathbf{0}\}$, by part 2 of Proposition \ref{BP0} we get
\[\langle \mathbf{u},\mathbf{w}\rangle_{\F_{q^n}}\cap U=\langle \mathbf{u},\mathbf{w}\rangle_{\F_{q^n}}\cap W'=\langle \mathbf{u},\mathbf{w}\rangle_{\F_{q}}.\]
Hence the assertion follows.
\end{proof}

\begin{theorem}
	\label{simplicitythm}
	Consider two $h$-scattered linear sets $L_U$ and $L_W$ of $V(r,q^n)$ with $h\geq 2$. They are
	$\mathrm{P}\Gamma\mathrm{L}(r,q^n)$-equivalent if and only if $U$ and $W$ are $\Gamma\mathrm{L}(r,q^n)$-equivalent.
\end{theorem}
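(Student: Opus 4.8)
The plan is to prove the nontrivial direction: if $L_U$ and $L_W$ are $\mathrm{P}\Gamma\mathrm{L}(r,q^n)$-equivalent, then $U$ and $W$ are $\Gamma\mathrm{L}(r,q^n)$-equivalent (the converse is immediate from the discussion preceding the theorem). Suppose $\varphi\in\mathrm{P}\Gamma\mathrm{L}(r,q^n)$ satisfies $L_U^\varphi=L_W$, and lift $\varphi$ to some $f\in\Gamma\mathrm{L}(r,q^n)$. Then $L_{U^f}=L_U^{\varphi_f}=L_W$. Writing $U':=U^f$, I have two $\F_q$-subspaces $U'$ and $W$ of $V$ defining the \emph{same} linear set, and $U'$ is $h$-scattered (being the $\Gamma\mathrm{L}$-image of the $h$-scattered subspace $U$, the scattered property is preserved by semilinear maps). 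Since $h\geq 2$, both $U'$ and $W$ are scattered with respect to lines, by Proposition \ref{scatole}.

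Now the key reduction is that $U'$ and $W$ are \emph{both} scattered-with-respect-to-lines subspaces defining the same linear set $L_{U'}=L_W$. The natural route is to invoke Lemma \ref{lem:simple}, which asserts exactly that under these hypotheses $U'=\lambda W$ for some $\lambda\in\F_{q^n}^*$. I would first check that Lemma \ref{lem:simple} applies: it is stated for a scattered-with-respect-to-lines linear set $L_U$ with $L_U=L_W$, which is precisely my situation with the roles of the symbols matched up. Thus $U'=\lambda W$ for some $\lambda\in\F_{q^n}^*$.

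It then remains to convert $U'=\lambda W$ into a $\Gamma\mathrm{L}$-equivalence between the original $U$ and $W$. Scalar multiplication by $\lambda$ is itself an element $g_\lambda\in\mathrm{GL}(r,q^n)\subseteq\Gamma\mathrm{L}(r,q^n)$ (acting diagonally on $V=V(r,q^n)$), so $U'=g_\lambda(W)$, i.e.\ $U^f=g_\lambda(W)$. Composing, $U=(g_\lambda\circ f^{-1})^{-1}(W)$ wait, more directly: $W=g_\lambda^{-1}(U^f)=(g_\lambda^{-1}\circ f)(U)$, and since $g_\lambda^{-1}\circ f\in\Gamma\mathrm{L}(r,q^n)$, the subspaces $U$ and $W$ lie in the same $\Gamma\mathrm{L}(r,q^n)$-orbit. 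This closes the argument.

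The main obstacle is that Lemma \ref{lem:simple} as proved in the excerpt is phrased only for subspaces scattered with respect to lines, and I must be careful that an $h$-scattered subspace with $h\geq 2$ genuinely inherits this property under $\Gamma\mathrm{L}$-maps and under passing to $U'=U^f$; this is where Proposition \ref{scatole} ($h$-scattered $\Rightarrow$ $2$-scattered for $h\geq 2$) does the essential work, together with the observation that a semilinear bijection maps $h$-dimensional $\F_{q^n}$-subspaces to $h$-dimensional $\F_{q^n}$-subspaces and preserves $\F_q$-dimensions of intersections. A secondary subtlety is that the theorem is stated for general rank $r$, whereas the supporting Propositions \ref{BP0}, \ref{dim} and Lemma \ref{lem:simple} are established using two-dimensional restrictions; however, since scattered-with-respect-to-lines is tested on $2$-dimensional $\F_{q^n}$-subspaces (lines of $\PG(r-1,q^n)$), the line-by-line arguments in those lemmas go through in arbitrary rank, so no genuine gap arises. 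The whole proof is therefore short, resting almost entirely on Lemma \ref{lem:simple}.
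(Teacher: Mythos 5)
Your proof is correct and follows essentially the same route as the paper: lift the collineation to $f\in\Gamma\mathrm{L}(r,q^n)$, observe $L_{U^f}=L_W$, and combine Proposition \ref{scatole} with Lemma \ref{lem:simple} to obtain $\lambda U^f=W$, hence $\Gamma\mathrm{L}(r,q^n)$-equivalence. Your additional checks (preservation of the $h$-scattered property under semilinear maps, and applicability of the supporting lemmas in arbitrary rank) are points the paper leaves implicit, but they do not constitute a different argument.
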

\begin{proof}
	The if part is trivial. To prove the only if part assume that there exists $f \in \G(r,q^n)$ such that $L_U^{\varphi_f}=L_W$, where $\varphi_f$ is the collineation induced by $f$. Since $L_U^{\varphi_f}=L_{U^f}$, by Proposition \ref{scatole} and Lemma \ref{lem:simple}, there exists $\lambda \in \F_{q^n}^*$ such that $\lambda U^f = W$ and hence $U$ and $W$ lie on the same orbit of $\G(r,q^n)$.
\end{proof}

\subsection{Scattered linear sets with respect to hyperplanes and MRD-codes}
\label{sec:MRD}

A \emph{rank distance (or RD) code} $\cC$ of $\F_q^{n\times m}$, $n \leq m$, can be considered as a subset of $\mathrm{Hom}_{\F_q}(U,V)$, where $\dim_{\F_{q}} U = m$ and $\dim_{\F_{q}} V = n$,
with \emph{rank distance} defined as $d(f,g):=\mathrm{rk}(f-g)$.
The minimum distance of $\cC$ is $d:=\min \{d(f,g) : f,g\in \cC, f\neq g\}$.

\begin{result}[\cite{Delsarte}]
\label{Dels}
	If $\cC$ is a rank distance code of $\F_q^{n\times m}$, $n \leq m$, with minimum distance $d$, then
\begin{equation}
\label{MRDbound}
	|\cC|\leq q^{m(n-d+1)}.
\end{equation}
\end{result}

Rank distance codes for which \eqref{MRDbound} holds with equality are called \emph{maximum rank distance (or MRD) codes}.

\medskip

From now on, we will only consider $\F_q$-linear MRD-codes of $\F_q^{n\times n}$, i.e. those which can be identified with $\F_q$-subspaces of $\mathrm{End}_{\F_q}(\F_{q^n})$.
Since $\mathrm{End}_{\F_q}(\F_{q^n})$ is isomorphic to the ring of $q$-polynomials over $\F_{q^n}$  modulo $x^{q^n}-x$, denoted by $\cL_{n,q}$, with addition and composition as operations, we will consider $\cC$ as an $\F_q$-subspace of $\cL_{n,q}$.
Given two $\F_q$-linear MRD codes, $\cC_1$ and $\cC_2$, they are equivalent if and only if there exist $\varphi_1$, $\varphi_2\in \cL_{n,q}$ permuting $\F_{q^n}$ and $\rho\in \mathrm{Aut}(\F_q)$ such that
\[ \varphi_1\circ f^\rho \circ \varphi_2 \in \cC_2 \text{ for all }f\in \cC_1,\]
where $\circ$ stands for the composition of maps and $f^\rho(x)= \sum a_i^\rho x^{q^i}$ for $f(x)=\sum a_i x^{q^i}$.
For a rank distance code $\cC$ given by a set of linearized polynomials, its left and right idealisers can be written as:
\[L(\cC)= \{ \varphi \in \cL_{n,q} : \varphi \circ f \in \cC \text{ for all }f\in \cC \},\]
\[R(\cC)= \{ \varphi \in \cL_{n,q} : f \circ \varphi \in \cC \text{ for all }f\in \cC \}.\]

By \cite[Section 2.7]{Lunardon2017} and \cite{ShVdV} the next result follows. We give a proof of the first part for the sake of completeness.

\begin{result}
	\label{cod}
	$\cC$ is an $\F_q$-linear MRD-code of $\cL_{n,q}$ with minimum distance $n-r+1$ and with left-idealiser isomorphic to $\F_{q^n}$ if and only if up to equivalence
	\[\cC=\la f_1(x),\ldots,f_r(x)\ra_{\F_{q^n}}\]
	for some $f_1,f_2,\ldots,f_r \in \cL_{n,q}$	and the $\F_q$-subspace
	\[U_{\cC}=\{(f_1(x),\ldots,f_r(x)) : x\in \F_{q^n}\}\]
	is a maximum $(r-1)$-scattered $\F_q$-subspace of $\F_{q^n}^r$.
\end{result}
\begin{proof}
	Let $T= \{\omega_a : a \in \F_{q^n}\}$, where for each $a\in \F_{q^n}$, $\omega_a(x)=ax \in \cL_{n,q}$ and let $L$ denote the left-idealiser of $\cC$. Since $T$ and $L$ are Singer cyclic subgroups of $\mathrm{GL}(\F_{q^n},\F_q)$ and any two such groups are conjugate (cf.\  \cite[pg. 187]{Huppert}) it follows that there exists an invertible $q$-polynomial $g$ such that $g\circ L \circ g^{-1}=T$. Then for each $h\in \cC':=g^{-1}\circ \cC$ it holds that $\omega_a \circ h \in \cC'$ for each $a\in \F_{q^n}$, which proves the first statement. For the second part see \cite[Corollary 4.4]{ShVdV}.
\end{proof}

\begin{remark}
The \emph{adjoint} of a $q$-polynomial $f(x)=\sum_{i=0}^{n-1}a_i x^{q^i}$, with respect to the bilinear form $\langle x,y\rangle:=\mathrm{Tr}_{q^n/q}(xy)$ (\footnote{Where $\mathrm{Tr}_{q^n/q}(x)=x+x^q+\ldots+x^{q^{n-1}}$ denotes the $\F_{q^n} \rightarrow \F_q$ trace function.}), is given by
\[\hat{f}(x):=\sum_{i=0}^{n-1}a_{i}^{q^{n-i}} x^{q^{n-i}}.\]
If $\cC$ is a rank distance code given by $q$-polynomials, then the \emph{adjoint code} $\cC^\top$ of $\cC$ is $\{\hat{f} : f\in\cC\}$. The code $\cC$ is an MRD if and only if $\cC^\top$ is an MRD and also $L(\cC) \cong R(\cC^\top)$, $R(\cC) \cong L(\cC^\top)$. Thus Result \ref{cod} can be translated also to codes with right-idealiser isomorphic to $\F_{q^n}$.
\end{remark}

The next result follows from \cite[Proposition 3.5]{ShVdV}.

\begin{result}
\label{resvdvs}
Let $\cC$ and $\cC'$ be two $\F_q$-linear MRD-codes of $\cL_{n,q}$ with minimum distance $n-r+1$ and with left-idealisers isomorphic to $\F_{q^n}$.
Then $U_{\cC}$ and $U_{\cC'}$ are $\G(r,q^n)$-equivalent if and only if $\cC$ and $\cC'$ are equivalent.
\end{result}

By Theorem \ref{simplicitythm}, for $r>2$ we can extend Result \ref{resvdvs} in the following way.

\begin{theorem}
Let $\cC$ and $\cC'$ be two $\F_q$-linear MRD-codes of $\cL_{n,q}$ with minimum distance $n-r+1$, $r>2$, and with left-idealisers isomorphic to $\F_{q^n}$.
Then the linear sets $L_{U_{\cC}}$ and $L_{U_{\cC'}}$ are $\mathrm{P}\Gamma\mathrm{L}(r,q^n)$-equivalent
if and only if $\cC$ and $\cC'$ are equivalent.
\end{theorem}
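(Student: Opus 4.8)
The plan is to observe that this statement is a clean composition of three results already established in the excerpt, with the hypothesis $r>2$ playing the decisive role. First I would invoke Result~\ref{cod}: since $\cC$ and $\cC'$ are $\F_q$-linear MRD-codes of $\cL_{n,q}$ with minimum distance $n-r+1$ and left-idealisers isomorphic to $\F_{q^n}$, up to equivalence they are of the form $\la f_1,\ldots,f_r\ra_{\F_{q^n}}$ and $\la f_1',\ldots,f_r'\ra_{\F_{q^n}}$, and the associated $\F_q$-subspaces $U_{\cC}$ and $U_{\cC'}$ of $\F_{q^n}^r=V(r,q^n)$ are maximum $(r-1)$-scattered. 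This is the bridge that moves the problem from the world of codes into the world of scattered subspaces and their linear sets.

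The key point is that $r>2$ forces $h:=r-1\geq 2$, so $U_{\cC}$ and $U_{\cC'}$ are $h$-scattered with $h\geq 2$ and the linear sets $L_{U_{\cC}}$, $L_{U_{\cC'}}$ are $h$-scattered linear sets with $h\geq 2$. This is exactly the regime in which Theorem~\ref{simplicitythm} applies, so I would next apply it to conclude that $L_{U_{\cC}}$ and $L_{U_{\cC'}}$ are $\mathrm{P}\Gamma\mathrm{L}(r,q^n)$-equivalent if and only if $U_{\cC}$ and $U_{\cC'}$ lie in the same orbit of $\G(r,q^n)$. Finally I would chain this with Result~\ref{resvdvs}, which states that $U_{\cC}$ and $U_{\cC'}$ are $\G(r,q^n)$-equivalent if and only if $\cC$ and $\cC'$ are equivalent as codes. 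Composing the two ``if and only if'' statements gives precisely the claim: $L_{U_{\cC}}$ and $L_{U_{\cC'}}$ are $\mathrm{P}\Gamma\mathrm{L}(r,q^n)$-equivalent exactly when $\cC$ and $\cC'$ are equivalent.

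There is no genuinely hard computation here; the content of the theorem has been absorbed into the earlier machinery. The only point requiring care is the hypothesis $r>2$, which is what guarantees $h=r-1\geq 2$ and hence licenses the use of Theorem~\ref{simplicitythm}. I would emphasise that this hypothesis is not cosmetic: for $r=2$ one has $h=1$, and as recalled in Section~\ref{sec:linearsets} there exist maximum $1$-scattered subspaces of $V(2,q^n)$ lying on distinct $\G(2,q^n)$-orbits yet defining $\mathrm{P}\Gamma\mathrm{L}$-equivalent linear sets, so the ``only if'' direction genuinely breaks down there. Thus the main conceptual obstacle—establishing that $\mathrm{P}\Gamma\mathrm{L}$-equivalence of the linear sets can be upgraded to $\G$-equivalence of the defining subspaces—has already been handled in Theorem~\ref{simplicitythm}, and what remains is to verify that the hypotheses of each cited result are met and to record the composition of the equivalences.
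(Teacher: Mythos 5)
Your proposal is correct and matches the paper's own (implicit) argument exactly: the paper states this theorem as an immediate consequence of combining Result~\ref{cod}, Theorem~\ref{simplicitythm} (applicable precisely because $r>2$ gives $h=r-1\geq 2$), and Result~\ref{resvdvs}, which is the same chain of equivalences you describe. Your additional remark on why $r>2$ is essential mirrors the paper's own discussion of the $r=2$ counterexamples in Section~\ref{sec:linearsets}.
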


In the following we motivate why we used the term ``Delsarte dual" 
in Definition \ref{deffff}.
In particular, we prove that the duality of Section \ref{DD} corresponds to the Delsarte duality on MRD-codes when $(r-1)$-scattered $\F_q$-subspaces of $\F_{q^n}^r$ are considered.

First recall that in terms of linearized polynomials, the Delsarte dual of a rank distance code $\cC$ of $\cL_{n,q}$ introduced in \cite{Delsarte} and in \cite{Gabidulin} can be interpreted as follows
\[ \cC^\perp=\{f \in \cL_{n,q} : b(f,g)=0 \,\,\, \forall g \in \cC \}, \]
where $b(f,g)=\mathrm{Tr}_{q^n/q}\left(\sum_{i=0}^{n-1} a_ib_i\right)$ for $f(x)=\sum_{i=0}^{n-1} a_ix^{q^i}$ and $g(x)=\sum_{i=0}^{n-1} b_i x^{q^i} \in \mathcal{L}_{n,q}$.

\begin{remark}
\label{rem:termDD}{\rm
Let $\cC$ be an $\F_q$-linear MRD-code of $\cL_{n,q}$ with minimum distance $n-r+1$ and with left-idealiser isomorphic to $\F_{q^n}$.
By Result \ref{cod} and by \cite[Theorem 2.2]{CsMPZh}, there exist $r$ distinct integers in $\{0,\ldots,n-1\}$ such that, up to equivalence,
\[ \cC=\langle h_0(x),\ldots, h_{r-1}(x) \rangle_{\F_{q^n}}, \]
where 
\begin{equation}\label{eq:hi}
h_i(x)=x^{q^{t_i}}+\sum_{j \notin\{t_0,\ldots,t_{r-1}\}} g_{i,j}x^{q^j}\end{equation} 
and $g_{i,j} \in \F_{q^n}$. 

\noindent Also, let $\{s_0,s_1,\ldots,s_{n-r-1}\}:= \{0,\ldots,n-1\} \setminus \{t_0,\ldots,t_{r-1}\}$. Then it is easy to see that the Delsarte dual of $\cC$ is
\[\cC^{\perp}=\langle h_0'(x),\ldots, h_{n-r-1}'(x) \rangle_{\F_{q^n}},\]
where
\begin{equation}\label{eq:h'i} h_i'(x)=x^{q^{s_i}}-\sum_{j\in \{t_0,\ldots,t_{r-1}\}}g_{j,s_i}x^{q^j}.\end{equation}}
\end{remark}

\begin{theorem}\label{th:termDD}
Let $\cC$ be an $\F_q$-linear MRD-code of $\cL_{n,q}$ with minimum distance $n-r+1$ and with left-idealiser isomorphic to $\F_{q^n}$. Then there exist $h_0(x),\ldots, h_{r-1}(x),h_0'(x),\ldots, h_{n-r-1}'(x) \in \mathcal{L}_{n,q}$ such that, up to equivalence,
\begin{itemize}
    \item $\cC=\langle h_0(x),\ldots, h_{r-1}(x) \rangle_{\F_{q^n}}$,
    \item $\cC^{\perp}=\langle h_0'(x),\ldots, h_{n-r-1}'(x) \rangle_{\F_{q^n}}$,
    \item the Delsarte dual of $U_{\mathcal{C}}=\{ (h_0(x),\ldots, h_{r-1}(x)) : x \in \F_{q^n} \}$ is the $\F_q$-subspace $U_{\mathcal{C}^\perp}=\{ (h_0'(x),\ldots, h_{n-r-1}'(x)) : x \in \F_{q^n} \}$.
\end{itemize}    
\end{theorem}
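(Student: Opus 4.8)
The first two bullet points are precisely the content of Remark \ref{rem:termDD}, so the substance of the statement is the third one: that the geometric Delsarte dual of Definition \ref{deffff}, applied to $U_{\cC}$, produces $U_{\cC^\perp}$. The plan is to realise the abstract embedding of Section \ref{DD} by a concrete one tailored to $q$-polynomials. By Result \ref{cod} we may assume $\cC=\la h_0(x),\ldots,h_{r-1}(x)\ra_{\F_{q^n}}$ with the $h_i$ as in \eqref{eq:hi}, so that $U_{\cC}$ is a maximum $(r-1)$-scattered subspace of $\F_{q^n}^r$ of dimension $k=n>r$. I would identify $\cL_{n,q}$ with $\F_{q^n}^n$ by sending $\sum_j a_j x^{q^j}$ to its coefficient vector $(a_0,\ldots,a_{n-1})$, and let $M$ (resp.\ $M'$) be the $r\times n$ (resp.\ $(n-r)\times n$) matrix whose rows are the coefficient vectors of $h_0,\ldots,h_{r-1}$ (resp.\ of $h_0',\ldots,h_{n-r-1}'$). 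Set $\V=\F_{q^n}^n=V(n,q^n)$ and let $W=\{(x,x^q,\ldots,x^{q^{n-1}}):x\in\F_{q^n}\}$; a Moore determinant argument gives $\la W\ra_{\F_{q^n}}=\V$, and $\dim_{\F_q}W=n=k$.

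Next I would fix the sesquilinear form. Take $\beta$ to be the standard symmetric (dot) product on $\V$, with companion automorphism the identity. Its restriction to $W$ is
\[\beta\big((x,\ldots,x^{q^{n-1}}),(y,\ldots,y^{q^{n-1}})\big)=\sum_{j=0}^{n-1}x^{q^j}y^{q^j}=\mathrm{Tr}_{q^n/q}(xy),\]
which is non-degenerate; this is exactly the bilinear form $b$ defining the Delsarte dual of $\cC$, and this coincidence is the conceptual heart of the argument. By the Remark at the end of Section \ref{DD} the Delsarte dual does not depend on the chosen form, so I am free to use $\beta$. Now let $\phi\colon\V\to\F_{q^n}^r$, $\phi(\mathbf y)=M\mathbf y^t$, and put $\Gamma=\ker\phi=\ker M$, an $(n-r)$-dimensional $\F_{q^n}$-subspace. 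Since $\phi\big((x,\ldots,x^{q^{n-1}})\big)=(h_0(x),\ldots,h_{r-1}(x))$ we get $\phi(W)=U_{\cC}$; as $U_{\cC}$ has dimension $n$ the map $x\mapsto(h_0(x),\ldots,h_{r-1}(x))$ is injective, whence $W\cap\Gamma=\{\mathbf 0\}$. Condition \eqref{form:star} holds for $U_{\cC}$ because, by Theorem \ref{maininter} with $h=r-1$, hyperplanes meet it in dimension at most $r-1<n-1$; hence Proposition \ref{prop:dual} applies, the quotient $\V/\Gamma\cong\F_{q^n}^r$ realises $U_{\cC}$ as the image of $W+\Gamma$, and by Definition \ref{deffff} the Delsarte dual is $\bar U=W+\Gamma^\perp\subseteq\V/\Gamma^\perp$.

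It remains to identify $\V/\Gamma^\perp$ with $V(n-r,q^n)$ so that $\bar U$ becomes $U_{\cC^\perp}$. Because $\perp$ is the dot-product complement, $\Gamma^\perp=(\ker M)^\perp$ equals the $\F_{q^n}$-row space of $M$, that is $\cC$. On the other hand, Remark \ref{rem:termDD} gives that the row space of $M'$ is $\cC^\perp$, and since $\cC$ is $\F_{q^n}$-linear its Delsarte dual coincides with its dot-product orthogonal complement (non-degeneracy of the trace form upgrades $\mathrm{Tr}_{q^n/q}(\sum a_ib_i)=0$ for all scalar multiples to $\sum a_ib_i=0$). Hence, defining $\phi'\colon\V\to\F_{q^n}^{n-r}$ by $\phi'(\mathbf y)=M'\mathbf y^t$, we obtain
\[\ker\phi'=\ker M'=(\text{row space of }M')^\perp=(\cC^\perp)^\perp=\cC=\Gamma^\perp.\]
Since $M'$ has full rank $n-r$, the map $\phi'$ is surjective and induces an isomorphism $\overline{\phi'}\colon\V/\Gamma^\perp\xrightarrow{\ \sim\ }\F_{q^n}^{n-r}=V(n-r,q^n)$. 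As $\phi'\big((x,\ldots,x^{q^{n-1}})\big)=(h_0'(x),\ldots,h_{n-r-1}'(x))$ and $\phi'(\Gamma^\perp)=\{\mathbf 0\}$, this isomorphism sends $\bar U=W+\Gamma^\perp$ to $\phi'(W)=U_{\cC^\perp}$, which is the desired conclusion.

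The matrix and linear-algebra identities are routine; the step I expect to require the most care is checking that, under the dot-product form, the two quotient pictures line up, namely that $\Gamma^\perp=\cC$ while simultaneously $\ker M'=\cC$. This rests on the double orthogonality $(\cC^\perp)^\perp=\cC$, which in turn needs the identification of the Delsarte dual of $\cC$ with its $\F_{q^n}$-dot-product dual; one should either invoke this from the cited coding theory or verify directly from \eqref{eq:hi} and \eqref{eq:h'i} that $M(M')^{t}=0$ (the two contributions $-g_{i,s_j}$ and $+g_{i,s_j}$ cancel), together with the dimension count $r+(n-r)=n$.
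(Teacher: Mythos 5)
Your proposal is correct and takes essentially the same approach as the paper: the same embedding into $\F_{q^n}^n$ with $W=\{(x,x^q,\ldots,x^{q^{n-1}}):x\in\F_{q^n}\}$, the same standard inner product (whose restriction to $W$ is the trace form underlying Delsarte duality), and the same subspace $\Gamma$ — your $\ker M$ is exactly the paper's $\Gamma$, and your map induced by $M'$ is exactly the paper's identification of $\V/\Gamma^\perp$ with $\Lambda'\cong\F_{q^n}^{n-r}$. The only difference is that where the paper concludes by ``direct computations'', you spell these out via $M(M')^{t}=0$ and the identification of the Delsarte dual of an $\F_{q^n}$-linear code with its dot-product complement, which is a welcome filling-in of detail rather than a different route.
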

\begin{proof}
By Remark \ref{rem:termDD}, up to equivalence, $\cC=\langle h_0(x),\ldots, h_{r-1}(x) \rangle_{\F_{q^n}}$, for some $h_0(x),\ldots,h_{r-1}(x)$ as in \eqref{eq:hi}, and $\cC^{\perp}=\langle h_0'(x),\ldots, h_{n-r-1}'(x) \rangle_{\F_{q^n}}$, for some $h_0'(x),\ldots,h_{n-r-1}'(x)$ as in \eqref{eq:h'i}.
Note that, since $\cC$ is an MRD-code, the linearized polynomials $h_0(x),\ldots,$ $h_{r-1}(x)$ have no common roots other than $0$ since otherwise the code would not contain invertible maps, see e.g. \cite[Lemma 2.1]{LTZ2}.
Our aim is to show that applying the duality introduced in Section \ref{DD} to $U_{\cC}=\{(h_0(x),\ldots,h_{r-1}(x)) : x \in \F_{q^n}\}$ we get the $\F_q$-subspace $U_{\cC^\perp}=\{(h_0'(x),\ldots, h_{n-r-1}'(x)) : x \in \F_{q^n}\}$.
By Result \ref{cod} we have that $U_{\cC}$ is a maximum $(r-1)$-scattered $\F_q$-subspace of $\F_{q^n}^r$.
If $n>r$, i.e. $\cC$ has minimum distance greater than one, we can embed $\Lambda=\langle U_{\cC} \rangle_{\F_{q^n}}$ in $\F_{q^n}^n$ in such a way that
\[ \Lambda=\left\{ (x_0,x_1,\ldots,x_r,\ldots,x_{n-1}) \in \F_{q^n}^n : x_j=0\,\,\, j \notin \{t_0,\ldots,t_{r-1}\} \right\},\]
and hence the vector $(h_0(x),\ldots, h_{r-1}(x))$ of $U_{\cC}$ is extended to the vector $(a_0,a_1,\ldots,a_{n-1})$ of $\F_{q^n}^n$ as follows
\[ a_i=\left\{ \begin{array}{ll} h_i(x) & \text{if}\,\, i \in \{t_0,\ldots,t_{r-1}\},\\ 0 & \text{otherwise}. \end{array} \right. \]
Let $\Gamma$ be the $\F_{q^n}$-subspace of $\F_{q^n}^n$ of dimension $n-r$ represented by the equations
\[ \Gamma : \left\{ \begin{array}{lll} x_{t_0}=-\displaystyle\sum_{j \notin\{t_0,\ldots,t_{r-1}\}} g_{0,j} x_j\\ \vdots \\ x_{t_{r-1}}=-\displaystyle\sum_{j \notin\{t_0,\ldots,t_{r-1}\}} g_{r-1,j} x_j \end{array} \right. \]
and let $W=\{(x,x^q,\ldots,x^{q^{n-1}}) : x \in \F_{q^n}\}$.
It can be seen that $\Gamma \cap W= \{{\bf 0}\}$, otherwise the polynomials $h_0(x),\ldots, h_{r-1}(x)$ would have a common root.
Also
\[ U_{\cC}= \langle W,\Gamma \rangle_{\F_q} \cap \Lambda. \]
Let $\beta \colon \F_{q^n}^n \times \F_{q^n}^n \rightarrow \F_{q^n}$ be the standard inner product, i.e. $\beta(({\bf x},{\bf y}))=\sum_{i=0}^{n-1} x_iy_i$ where ${\bf x}=(x_0,\ldots,x_{n-1})$ and ${\bf y}=(y_0,\ldots,y_{n-1})$.
Also, the restriction of $\beta$ over $W \times W$ is $\beta{\big|}_{W \times W}((x,x^q,\dots,x^{q^{n-1}}),(y,y^q,\dots,y^{q^{n-1}}))=\mathrm{Tr}_{q^n/q}(xy)$.
Furthermore, with respect to the orthogonal complement operation $\perp$ defined by $\beta$ we have that
\[ \Gamma^\perp : x_{j}= \sum_{\ell=0}^{r-1} g_{j,\ell} x_{t_\ell} \quad\quad j \notin \{t_0,\ldots,t_{r-1}\}. \]
Then the Delsarte dual $\bar U_\cC$ of $U_{\cC}$ is the $\F_q$-subspace $W+\Gamma^\perp$ of the quotient space $\F_{q^n}^n/\Gamma^\perp$ isomorphic to $U':=\langle W, \Gamma^\perp \rangle_{\F_q}\cap \Lambda'$,
where $\Lambda'$ is the $\F_{q^n}$-subspace of $\F_{q^n}^n$ of dimension $n-r$ represented by the following equations
\[ \Lambda' : x_{t_0}=\ldots=x_{t_{r-1}}=0. \]
By identifying $\Lambda'$ with $\F_{q^n}^{n-r}$, direct computations show that $U'$ can be seen as the $\F_q$-subspace $U_{\cC^\perp}=\{(h_0'(x),\ldots, h_{n-r-1}'(x)) : x \in\F_{q^n}\}$ of dimension $n$ of $\F_{q^n}^{n-r}$, i.e. $U'=U_{\cC^\perp}$.
\end{proof}

\section{\texorpdfstring{Intersections of maximum $h$-scattered subspaces with hyperplanes}{Intersections of maximum h-scattered subspaces with hyperplanes}}
\label{sec:inter}

This section is devoted to prove 

\medskip
\noindent
{\bf Theorem \ref{maininter}}	{\it If $U$ is a maximum $h$-scattered $\F_q$-subspace of a vector space $V(r,q^n)$ of dimension $rn/(h+1)$, then for any $(r-1)$-dimensional $\F_{q^n}$-subspace $W$ of $V(r,q^n)$ we have	\[\frac{rn}{(h+1)}-n\leq \dim_{\F_q}(U \cap W) \leq \frac{rn}{(h+1)}-n+h.\]}

As we already mentioned, the theorem above is a generalization of \cite[Theorem 4.2]{BL2000}, which is the $h=1$ case of our result. 
In that paper, the number of hyperplanes meeting a $1$-scattered subspace of dimension $rn/2$ in a subspace of dimension $rn/2-n$ or $rn/2-n+1$ has been determined as well.
Subsequently to this paper, in \cite{ZiniZ} (see also \cite{NZ} for the $h=2$ case), such values have been determined for every $h$.

\subsection{Preliminaries on Gaussian binomial coefficients}

The Gaussian binomial coefficient $\qbin{n}{k}{q}$ is defined as the number of the $k$-dimensional subspaces of the $n$-dimensional vector space $\F_q^n$. Hence

\begin{equation}
\label{formulanuova}
\qbin{n}{k}{q}=\left\{\begin{array}{ll}
1 & \mbox{if } k=0\\
\frac{(1-q^n)(1-q^{n-1})\ldots(1-q^{n-k+1})}{(1-q^k)(1-q^{k-1})\ldots(1-q)} &  \mbox{if } 1\leq k\leq n\\
0 & \mbox{if } k>n.
\end{array}\right.
\end{equation}
Recall the following properties of the Gaussian binomial coefficients.
\begin{equation}
\label{twotoone}
\qbin{n}{k}{q}\qbin{k}{j}{q}=\qbin{n}{j}{q}\qbin{n-j}{k-j}{q},
\end{equation}

\begin{equation}
\label{twotoone2}
\qbin{n}{k}{q}=\qbin{n}{n-k}{q}.
\end{equation}

\begin{equation}
\label{identity}
\prod_{j=0}^{n-1}(1+q^jt)=\sum_{j=0}^n q^{j(j-1)/2}\qbin{n}{j}{q}t^j,
\end{equation}

\begin{definition} The $q$-Pochhammer symbol is defined as
	\[(a;q)_k=(1-a)(1-aq)\ldots(1-aq^{k-1}).\]
\end{definition}

\begin{theorem}[$q$-binomial theorem {\cite[pg. 25, Exercise 1.3 (i)]{klopedia}}]
	\begin{equation}
	\label{eqa}
	(ab;q)_n=\sum_{k=0}^n b^k \qbin{n}{k}{q}(a;q)_k (b;q)_{n-k},
	\end{equation}
	\begin{equation}
	\label{eqb}
	(ab;q)_n=\sum_{k=0}^n a^{n-k} \qbin{n}{k}{q}(a;q)_k (b;q)_{n-k}.
	\end{equation}
	
\end{theorem}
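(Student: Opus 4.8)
The plan is to prove \eqref{eqa} directly by induction on $n$, and then to deduce \eqref{eqb} from it by an $a\leftrightarrow b$ symmetry argument. Write $R_n(a,b)$ for the right-hand side of \eqref{eqa}. For the base case $n=0$ both sides equal the empty product $1$. For the inductive step the two tools I would use are the recurrence $(x;q)_{m+1}=(x;q)_m(1-xq^m)$ for the $q$-Pochhammer symbol, applied separately to the factors $(a;q)_k$ and $(b;q)_{n-k}$, together with the $q$-Pascal rule $\qbin{n+1}{k}{q}=\qbin{n}{k}{q}+q^{n+1-k}\qbin{n}{k-1}{q}$, which follows from the explicit expression \eqref{formulanuova}.

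Substituting the $q$-Pascal rule into $R_{n+1}(a,b)$ splits it as $A+B$, where $A$ collects the terms carrying $\qbin{n}{k}{q}$ and $B$ those carrying $q^{n+1-k}\qbin{n}{k-1}{q}$. In $A$ I would peel off the last factor of $(b;q)_{n+1-k}$ via the Pochhammer recurrence, producing $R_n(a,b)$ minus a correction sum. In $B$ I would reindex $k\mapsto k+1$ and peel off the last factor of $(a;q)_{k+1}$, again producing two sums. The key cancellation is that the correction sum coming from $A$ is exactly annihilated by the first sum arising from $B$; what survives is $R_n(a,b)-abq^n R_n(a,b)=(1-abq^n)R_n(a,b)$. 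Invoking the inductive hypothesis $R_n(a,b)=(ab;q)_n$ together with $(ab;q)_{n+1}=(ab;q)_n(1-abq^n)$ then closes the induction.

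To obtain \eqref{eqb}, I would exchange the roles of $a$ and $b$ in \eqref{eqa}; since $(ab;q)_n=(ba;q)_n$, this yields $(ab;q)_n=\sum_{k=0}^n a^k\qbin{n}{k}{q}(b;q)_k(a;q)_{n-k}$. Reindexing by $k\mapsto n-k$ and applying the symmetry $\qbin{n}{n-k}{q}=\qbin{n}{k}{q}$ of \eqref{twotoone2} turns the right-hand side into $\sum_{k=0}^n a^{n-k}\qbin{n}{k}{q}(a;q)_k(b;q)_{n-k}$, which is precisely the right-hand side of \eqref{eqb}. Thus \eqref{eqb} is a formal consequence of \eqref{eqa} and requires no separate induction.

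The main obstacle I anticipate is the bookkeeping in the inductive step: tracking the powers of $q$ under the shift $k\mapsto k+1$ and confirming that the two middle sums cancel exactly rather than merely resembling one another. Handling the boundary contributions (the $k=0$ and $k=n+1$ terms) cleanly requires the conventions $\qbin{n}{k}{q}=0$ for $k<0$ or $k>n$, so that the Pascal split and the reindexing are legitimate without separate edge cases. Once these conventions are in place, everything reduces to routine algebra with the two recurrences.
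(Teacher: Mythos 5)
Your proof is correct, but note that the paper itself offers no proof to compare against: the statement is quoted verbatim from Gasper--Rahman (Exercise 1.3(i), where it is left to the reader), so your induction supplies an argument the paper omits. I checked the inductive step and the cancellation works exactly as you predict: writing $R_{n+1}(a,b)=A+B$ via the $q$-Pascal rule $\qbin{n+1}{k}{q}=\qbin{n}{k}{q}+q^{n+1-k}\qbin{n}{k-1}{q}$, peeling $(b;q)_{n+1-k}=(b;q)_{n-k}(1-bq^{n-k})$ in $A$ gives $A=R_n(a,b)-C$ with $C=\sum_{k=0}^n b^{k+1}q^{n-k}\qbin{n}{k}{q}(a;q)_k(b;q)_{n-k}$, while reindexing $B$ by $k\mapsto k+1$ and peeling $(a;q)_{k+1}=(a;q)_k(1-aq^k)$ gives $B=C-abq^n R_n(a,b)$, since $aq^k\cdot b^{k+1}q^{n-k}=abq^n\cdot b^k$; hence $R_{n+1}(a,b)=(1-abq^n)R_n(a,b)$, closing the induction. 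Your derivation of \eqref{eqb} from \eqref{eqa} is also sound: swapping $a\leftrightarrow b$, reindexing $k\mapsto n-k$, and using \eqref{twotoone2} lands exactly on the right-hand side of \eqref{eqb}, so no second induction is needed. Two small housekeeping points: the paper's \eqref{formulanuova} only sets $\qbin{n}{k}{q}=0$ for $k>n$, so you are right that the convention $\qbin{n}{k}{q}=0$ for $k<0$ must be stipulated (it handles the $k=0$ term of $B$ and the $k=n+1$ term of $A$, both of which vanish); and the $q$-Pascal rule you invoke, while standard, does not appear in the paper, so a one-line verification from \eqref{formulanuova} would make the write-up fully self-contained. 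Compared with the standard textbook routes (e.g.\ deriving both identities from the Cauchy $q$-binomial theorem for ${}_1\phi_0$ series), your approach is more elementary and uses only the two recurrences, at the cost of the bookkeeping you anticipated.
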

\begin{corollary}
	In \eqref{eqa} and \eqref{eqb} put $a=q^{-nr/s}$ and $b=q^{nr/s-n}$ to obtain
	\begin{equation}
	\label{comp1}
	(q^{-n};q)_s=\sum_{j=0}^{s}q^{j(nr/s-n)}\qbin{s}{j}{q}(q^{-nr/s};q)_j(q^{nr/s-n};q)_{s-j},
	\end{equation}
	\begin{equation}
	\label{comp2}
	(q^{-n};q)_s=q^{-nr}\sum_{j=0}^s q^{jnr/s}\qbin{s}{j}{q}(q^{-nr/s};q)_j(q^{nr/s-n};q)_{s-j},
	\end{equation}
	respectively.
\end{corollary}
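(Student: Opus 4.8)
The plan is to derive both identities by a direct substitution into the two forms \eqref{eqa} and \eqref{eqb} of the $q$-binomial theorem, applied with the upper index set equal to $s$. Before substituting, I would stress one point of notation: in \eqref{eqa}--\eqref{eqb} the symbol serving as the upper summation bound is the one I take to be $s$ (while I rename the dummy index from $k$ to $j$), whereas the integer $n$ that appears in the prescribed values $a=q^{-nr/s}$ and $b=q^{nr/s-n}$ is the fixed ambient parameter and has nothing to do with that bound. Since \eqref{eqa} and \eqref{eqb} hold for arbitrary $a$ and $b$, the substitution is legitimate even when $nr/s$ is not an integer, the quantities $q^{\pm nr/s}$ being read as formal powers of $q$.

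With this understood, the left-hand sides of \eqref{comp1} and \eqref{comp2} are immediate: since $ab=q^{-nr/s}\cdot q^{nr/s-n}=q^{-n}$, we get $(ab;q)_s=(q^{-n};q)_s$. For \eqref{comp1} I would substitute into \eqref{eqa}: the factor $b^k$ becomes $(q^{nr/s-n})^j=q^{j(nr/s-n)}$, the binomial coefficient becomes $\qbin{s}{j}{q}$, and the two Pochhammer factors become $(q^{-nr/s};q)_j$ and $(q^{nr/s-n};q)_{s-j}$, which together reproduce \eqref{comp1} verbatim.

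For \eqref{comp2} I would carry out the same substitution into \eqref{eqb}. The only term behaving differently is $a^{n-k}$, which now reads $a^{s-j}=(q^{-nr/s})^{s-j}=q^{-(nr/s)(s-j)}=q^{-nr}\,q^{jnr/s}$; pulling the constant factor $q^{-nr}$ out of the sum yields exactly \eqref{comp2}.

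I do not expect any genuine obstacle here, as the entire content is the bookkeeping of exponents. The single place requiring care is the clash of the letter $n$ flagged above: one must avoid reading the $n$ inside $a$ and $b$ as the summation bound, which is precisely why I would fix that bound to $s$ from the outset.
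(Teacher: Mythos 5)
Your proposal is correct and is exactly the argument the paper intends: the corollary is stated as an immediate substitution of $a=q^{-nr/s}$, $b=q^{nr/s-n}$ (so $ab=q^{-n}$) into \eqref{eqa} and \eqref{eqb} with upper index $s$, and your exponent bookkeeping, including pulling out $q^{-nr}$ from $a^{s-j}$, matches this precisely. Your remark about not confusing the ambient parameter $n$ with the summation bound in the $q$-binomial theorem is a sensible clarification of the same step.
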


The $l$-th elementary symmetric function of the variables $x_1,x_2,\ldots, x_n$ is the sum of all distinct monomials which can be formed by multiplying together $l$ distinct variables.

\begin{definition}
	Denote by $\sigma_{k,l}$ the $l$-th elementary symmetric polynomial in $k+1$ variables evaluated in $1,q,q^2,\ldots,q^k$.
\end{definition}

\begin{lemma}[{\cite[Proposition 6.7 (b)]{Cameron}}]
	\label{Cam}
	\[\sigma_{k,l}=q^{l(l-1)/2}\qbin{k+1}{l}{q}.\]
\end{lemma}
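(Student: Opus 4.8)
The plan is to prove the identity $\sigma_{k,l}=q^{l(l-1)/2}\qbin{k+1}{l}{q}$ directly from the generating-function identity \eqref{identity}, which the paper has already recorded. Recall that $\sigma_{k,l}$ is by definition the $l$-th elementary symmetric polynomial evaluated at the $k+1$ points $1,q,q^2,\ldots,q^k$. The standard handle on elementary symmetric functions is that they are precisely the coefficients of the generating polynomial $\prod_{i}(1+x_i t)$; substituting the specific values $x_i=q^i$ will turn this abstract fact into the stated closed form.

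First I would write down the generating function for the $\sigma_{k,l}$ directly from the definition of elementary symmetric polynomials:
\[
\prod_{j=0}^{k}(1+q^j t)=\sum_{l=0}^{k+1}\sigma_{k,l}\, t^l,
\]
since expanding the product and collecting the coefficient of $t^l$ selects exactly the sum of all products of $l$ distinct values among $1,q,\ldots,q^k$, which is $\sigma_{k,l}$. Next I would invoke identity \eqref{identity} with $n$ replaced by $k+1$; that identity states
\[
\prod_{j=0}^{k}(1+q^j t)=\sum_{l=0}^{k+1} q^{l(l-1)/2}\qbin{k+1}{l}{q}\, t^l.
\]
Both expressions are the same polynomial in $t$, so their coefficients agree term by term. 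Comparing the coefficient of $t^l$ on the two sides yields immediately $\sigma_{k,l}=q^{l(l-1)/2}\qbin{k+1}{l}{q}$, which is the claim.

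There is essentially no serious obstacle here, since the heavy lifting is done by \eqref{identity}; the only genuine content is the bookkeeping observation that matches the combinatorial definition of $\sigma_{k,l}$ to the coefficient of $t^l$ in the product, together with the correct index shift $n\mapsto k+1$ (the product in \eqref{identity} runs over $j=0,\ldots,n-1$, and the $k+1$ variables $1,q,\ldots,q^k$ correspond to $j=0,\ldots,k$, i.e.\ exactly $n=k+1$ factors). If one preferred not to rely on \eqref{identity} as a black box, an alternative route would be an induction on $k$ using the Pascal-type recurrence $\qbin{k+1}{l}{q}=\qbin{k}{l}{q}+q^{k+1-l}\qbin{k}{l-1}{q}$ alongside the recurrence $\sigma_{k,l}=\sigma_{k-1,l}+q^{k}\sigma_{k-1,l-1}$ that follows from separating the largest variable $q^{k}$; but the generating-function comparison is cleaner and is the approach I would present.
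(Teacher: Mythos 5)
Your proof is correct. Note that the paper itself gives no proof of this lemma at all: it is quoted verbatim from Cameron's book (Proposition 6.7(b)), so there is no internal argument to compare against. Your derivation is a clean, self-contained justification using only material the paper has already recorded: the coefficient of $t^l$ in $\prod_{j=0}^{k}(1+q^jt)$ is by definition the $l$-th elementary symmetric polynomial evaluated at $1,q,\ldots,q^k$, i.e.\ $\sigma_{k,l}$, and \eqref{identity} with $n=k+1$ identifies that same coefficient as $q^{l(l-1)/2}\qbin{k+1}{l}{q}$; equating coefficients of the same polynomial finishes it. The only caveat worth voicing is a potential circularity: in some treatments the identity \eqref{identity} is itself \emph{proved} via elementary symmetric functions, in which case your main route would be restating rather than deriving. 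But this is harmless here, since \eqref{identity} has standard independent proofs (e.g.\ induction on $n$ via the $q$-Pascal recurrence), and your fallback induction on $k$ — using $\sigma_{k,l}=\sigma_{k-1,l}+q^{k}\sigma_{k-1,l-1}$ together with $\qbin{k+1}{l}{q}=\qbin{k}{l}{q}+q^{k+1-l}\qbin{k}{l-1}{q}$, whose exponents do check out since $l(l-1)/2+k+1-l=k+(l-1)(l-2)/2$ — gives a fully non-circular alternative. Either route would serve as a legitimate replacement for the external citation.
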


We will also need the following $q$-binomial inverse formula of Carlitz.

\begin{theorem}[{\cite[special case of Theorem 2, pg. 897 (4.2) and (4.3)]{Carlitz}}]
	\label{inverseqbinom}
	Suppose that $\{a_k\}_{k\ge 0}$ and $\{b_k\}_{k\geq 0}$ are two sequences of complex numbers. If $a_k=\sum_{j=0}^k (-1)^jq^{j(j-1)/2}\qbin{k}{j}{q}b_j$, then $b_k=\sum_{j=0}^k (-1)^jq^{j(j+1)/2-jk}\qbin{k}{j}{q}a_j$ and vice versa.
\end{theorem}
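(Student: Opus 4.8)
The plan is to read the statement as the assertion that two infinite lower-triangular matrices are mutual inverses, and to verify this by a direct orthogonality computation. Writing $M_{kj}=(-1)^j q^{j(j-1)/2}\qbin{k}{j}{q}$ for the forward coefficients and $N_{kj}=(-1)^j q^{j(j+1)/2-jk}\qbin{k}{j}{q}$ for the backward coefficients, the two transformations read $a_k=\sum_{j}M_{kj}b_j$ and $b_k=\sum_j N_{kj}a_j$. Substituting the first into the second, the claim reduces to the orthogonality relation $\sum_{j=i}^{k}N_{kj}M_{ji}=\delta_{ki}$ for all $0\le i\le k$; the ``vice versa'' direction is the analogous relation $\sum_{j=i}^k M_{kj}N_{ji}=\delta_{ki}$, which may be verified the same way or deduced from the first, since both matrices are locally finite and lower triangular with nonzero diagonal entries.

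First I would merge the two Gaussian binomials. The product $N_{kj}M_{ji}$ carries the factor $\qbin{k}{j}{q}\qbin{j}{i}{q}$, which by \eqref{twotoone} equals $\qbin{k}{i}{q}\qbin{k-i}{j-i}{q}$. Pulling the $i$-dependent part out of the $j$-sum and substituting $m=j-i$, $N=k-i$, a short manipulation of the $q$-exponents (completing $(m+i)(m+i+1)/2-(m+i)k$ and separating its $m$-free part) recasts the inner sum as
\[
\sum_{m=0}^{N}q^{m(m-1)/2}\qbin{N}{m}{q}\bigl(-q^{1-N}\bigr)^{m},
\]
up to the overall prefactor $(-1)^i q^{i(i-1)/2}\qbin{k}{i}{q}\cdot(-1)^i q^{i(i+1)/2-ik}$.

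The crucial step is then to evaluate this inner sum by the product identity \eqref{identity} with $n=N$ and $t=-q^{1-N}$, which gives $\prod_{j=0}^{N-1}(1+q^{j}t)=\prod_{j=0}^{N-1}(1-q^{\,j+1-N})$. The factor corresponding to $j=N-1$ is $1-q^{0}=0$, so the whole product vanishes whenever $N\ge 1$, while for $N=0$ the empty product equals $1$. Hence the inner sum equals $\delta_{N,0}=\delta_{ki}$, and on the diagonal $k=i$ the prefactor collapses to $q^{0}=1$ (using $\qbin{k}{k}{q}=1$), yielding exactly $\delta_{ki}$. The reverse orthogonality is obtained by the same reindexing, with the roles of the two exponent shifts exchanged.

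I expect the main obstacle to be purely the bookkeeping of the $q$-powers: one must combine $j(j+1)/2-jk$ with $i(i-1)/2$, carry out the reindexing $m=j-i$ accurately, and recognize the resulting exponent $m(m-1)/2+m(1-N)$ as precisely the shape needed to apply \eqref{identity}. No deeper idea is involved; the entire content is the telescoping-to-zero coming from the $\l=0$ factor of $\prod_{l}(1-q^{-l})$.
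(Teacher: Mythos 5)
Your proof is correct, but note there is no in-paper argument to compare it with: the paper quotes Theorem \ref{inverseqbinom} from Carlitz without proof, so your computation actually makes this ingredient of the paper self-contained. Your route is the natural matrix-orthogonality one, and the details check out. Merging the binomials via \eqref{twotoone} and reindexing with $m=j-i$, $N=k-i$ gives
\[
\sum_{j=i}^{k}N_{kj}M_{ji}=q^{i^2-ik}\qbin{k}{i}{q}\sum_{m=0}^{N}q^{m(m-1)/2}\qbin{N}{m}{q}\bigl(-q^{1-N}\bigr)^{m},
\]
and \eqref{identity} with $t=-q^{1-N}$ turns the inner sum into $\prod_{j=0}^{N-1}(1-q^{j+1-N})$, which vanishes for $N\geq 1$ (the $j=N-1$ factor is $1-q^{0}=0$) and equals $1$ for $N=0$, where the prefactor $q^{i^2-ik}\qbin{k}{i}{q}$ also collapses to $1$; this is exactly $\delta_{ki}$, as you claim. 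The companion relation $\sum_{j=i}^{k}M_{kj}N_{ji}=\delta_{ki}$ comes out even more cleanly: there the $q$-exponents cancel to leave $\qbin{k}{i}{q}\sum_{m=0}^{N}(-1)^{m}q^{m(m-1)/2}\qbin{N}{m}{q}$, i.e.\ \eqref{identity} at $t=-1$, killed by its $j=0$ factor for $N\geq1$. Your alternative shortcut for this second relation is also legitimate: both kernels are lower triangular with nonzero diagonal ($M_{kk}=(-1)^{k}q^{k(k-1)/2}$, $N_{kk}=(-1)^{k}q^{-k(k-1)/2}$), so the identity $NM=I$ restricts to the finite leading principal submatrices, where a left inverse is a right inverse, giving $MN=I$ and hence the ``vice versa'' direction. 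A pleasant feature of your argument is that its only inputs are \eqref{twotoone} and \eqref{identity}, both already stated and used in the paper's Section \ref{sec:inter} computations, so the proof fits the paper's toolkit exactly.
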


\subsection{Double counting}
Put $s=h+1\mid rn$ and let $U$ be an $rn/s$-dimensional $\F_q$-subspace of $V(r,q^n)$ such that for each $(s-1)$-dimensional $\F_{q^n}$-subspace $W$, we have $\dim_{\F_q}(W \cap U)\leq s-1$.

Let $h_i$ denote the number of $(r-1)$-dimensional $\F_{q^n}$-subspaces meeting $U$ in an $\F_q$-subspace of dimension $i$. It is easy to see that
\[h_i=0 \mbox{ for } i<\frac{rn}{s}-n.\]
In $\PG(V,\F_{q^n})=\PG(r-1,q^n)$, the integer $h_i$ coincides with the number of hyperplanes $\PG(W,\F_{q^n})$ such that $\dim_{\F_q}(W\cap U)=i$. Also, the number of hyperplanes is $(q^{rn}-1)/(q^n-1)$, which is the same as $\sum_i h_i$, thus

\begin{equation}
\label{st}
\sum_i h_i(q^n-1)=q^{rn}-1.
\end{equation}

\noindent For $k\in \{0,1,\ldots, s-1\}$ we can double count the set
\[\{(H,(P_1,P_2,\ldots,P_{k+1})) : \mbox{$H$ is a hyperplane},\  P_1,P_2,\ldots,P_{k+1} \in H\cap L_U\] \[\mbox{\  \  \  \ and } \la P_1,P_2,\ldots,P_{k+1} \ra\cong\PG(k,q)\}.\]
By Proposition \ref{scatole} this gives

\[\sum_i h_i \left(\frac{q^i-1}{q-1}\right)\left(\frac{q^i-q}{q-1}\right)\ldots \left(\frac{q^i-q^k}{q-1}\right)=\]
\[\left(\frac{q^{rn/s}-1}{q-1}\right)\left(\frac{q^{rn/s}-q}{q-1}\right)
\ldots\left(\frac{q^{rn/s}-q^k}{q-1}\right)\left(\frac{q^{(r-k-1)n}-1}{q^n-1}\right),\]

\noindent or equivalently

\begin{lemma}
	\label{kif}
	\[\sum_i h_i(q^n-1)(q^i-1)(q^i-q)(q^i-q^2)\ldots(q^i-q^k)=\]
	\[(q^{rn/s}-1)(q^{rn/s}-q)(q^{rn/s}-q^2)\ldots(q^{rn/s}-q^k)(q^{(r-k-1)n}-1).\]
	\qed
\end{lemma}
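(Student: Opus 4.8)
The plan is to establish Lemma \ref{kif} by completing the double counting already initiated, i.e.\ by evaluating in two ways the number of pairs $(H,(P_1,\ldots,P_{k+1}))$, where $H$ is a hyperplane of $\PG(r-1,q^n)$ and $P_1,\ldots,P_{k+1}$ are points of $H\cap L_U$ with $\la P_1,\ldots,P_{k+1}\ra\cong\PG(k,q)$. The starting observation is that, since $U$ is $1$-scattered by Proposition \ref{scatole}, every point of $L_U$ corresponds to a unique $1$-dimensional $\F_q$-subspace of $U$, and likewise every point of $H\cap L_U=L_{U\cap W}$ corresponds to a unique $\F_q$-line of $U\cap W$, where $W$ is the $(r-1)$-dimensional $\F_{q^n}$-subspace with $H=\PG(W,\F_{q^n})$. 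Under this correspondence the condition $\la P_1,\ldots,P_{k+1}\ra\cong\PG(k,q)$ is exactly the $\F_q$-independence of the associated vectors $\mathbf{u}_1,\ldots,\mathbf{u}_{k+1}$.

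First I would count by fixing the hyperplane. If $\dim_{\F_q}(W\cap U)=i$, then $L_{U\cap W}$ has $(q^i-1)/(q-1)$ points, and the number of ordered $(k+1)$-tuples of $\F_q$-independent points among them is
\[
\prod_{j=0}^{k}\frac{q^i-q^j}{q-1},
\]
obtained by choosing each successive point outside the $\F_q$-span of the preceding ones. Grouping the hyperplanes by the value $i$ through the numbers $h_i$ then gives the left-hand expression $\sum_i h_i\prod_{j=0}^{k}(q^i-q^j)/(q-1)$.

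Next I would count by fixing the tuple of points. Since $\dim_{\F_q}U=rn/s$, the number of ordered $(k+1)$-tuples of $\F_q$-independent points of $L_U$ is $\prod_{j=0}^{k}(q^{rn/s}-q^j)/(q-1)$, and for each such tuple the hyperplanes through $P_1,\ldots,P_{k+1}$ are precisely those containing the $\F_{q^n}$-span $S=\la\mathbf{u}_1,\ldots,\mathbf{u}_{k+1}\ra_{\F_{q^n}}$. The step I expect to be the main obstacle is showing that this count is the \emph{same} for every admissible tuple, namely $(q^{(r-k-1)n}-1)/(q^n-1)$; this holds as soon as $\dim_{\F_{q^n}}S=k+1$, i.e.\ that $\F_q$-independent vectors of $U$ stay $\F_{q^n}$-independent. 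I would prove this from $k\le s-1=h$: were $\dim_{\F_{q^n}}S=t\le k$, then $S$ would be a $t$-dimensional $\F_{q^n}$-subspace with $\dim_{\F_q}(U\cap S)\ge k+1\ge t+1$, contradicting that $U$ is $t$-scattered (Proposition \ref{scatole}, applicable since $1\le t\le h$). Hence the count equals $\prod_{j=0}^{k}(q^{rn/s}-q^j)/(q-1)\cdot(q^{(r-k-1)n}-1)/(q^n-1)$.

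Finally, equating the two evaluations reproduces the displayed identity preceding the lemma, and multiplying through by $(q-1)^{k+1}(q^n-1)$ clears every denominator and yields exactly the asserted formula of Lemma \ref{kif}. The genuinely nontrivial point throughout is the uniformity of the hyperplane count, which is where the $h$-scattered hypothesis (via Proposition \ref{scatole}) is used; everything else is elementary counting of independent tuples in $\F_q$-vector spaces.
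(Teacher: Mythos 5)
Your proof is correct and follows essentially the same route as the paper: the paper obtains Lemma \ref{kif} precisely by double counting the pairs $(H,(P_1,\ldots,P_{k+1}))$, using Proposition \ref{scatole} at exactly the step you flag as the crux (that $\F_q$-independent vectors of $U$, at most $s=h+1$ of them, span a $(k+1)$-dimensional $\F_{q^n}$-subspace, making the hyperplane count uniform), and then clears denominators. Your write-up just makes explicit the details the paper leaves implicit in the phrase ``By Proposition \ref{scatole} this gives''.
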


\noindent Our aim is to prove
\[A:=\sum_i h_i(q^n-1)(q^i-q^{n(r-s)/s})\ldots(q^i-q^{n(r-s)/s+s-1})=0.\]
This would clearly yield $h_i=0$, for $i>n(r-s)/s+s-1$, and hence Theorem \ref{maininter}.

\subsection{\texorpdfstring{Expressing $A$}{Expressing A}}

First for $k\in \{0,\ldots, s-1\}$ we will express
\[\alpha_k:=\sum_i h_i(q^n-1)q^{ki}.\]
Put $\beta_0:=\alpha_0=q^{rn}-1$ (cf. \eqref{st}), and
\[\beta_k:=\sum_i h_i(q^n-1)(q^i-1)(q^i-q)\ldots(q^i-q^{k-1}),\]
where the values of $\beta_k$ are known due to Lemma \ref{kif}.

\noindent Recall
\[\sigma_{k,l}=\sum_{0\leq i_1<\ldots<i_l\leq k}q^{i_1+\ldots+i_l}.\]
Then it is easy to see that
\[\alpha_k=\beta_k+\sum_{j=0}^{k-1}(-1)^{k-j-1}\alpha_j \sigma_{k-1,k-j},\]
and hence, using also Lemma \ref{Cam},
\[\beta_{k}=\sum_{j=0}^k (-1)^{k-j}q^{(k-j)(k-j-1)/2} \alpha_{j}\qbin{k}{k-j}{q},\]
or equivalently, by \eqref{twotoone2},
\[\beta_kq^{-k(k-1)/2}(-1)^k=\sum_{j=0}^k q^{j(j+1)/2-jk}\qbin{k}{j}{q}(-1)^{j}\alpha_j.\]
Then Theorem \ref{inverseqbinom} applied to the sequences $\{a_k=\alpha_k\}_k$ and $\{b_k=\beta_kq^{-k(k-1)/2}(-1)^k\}_k$ gives

\begin{equation}
\label{kifejez}
\alpha_k=\sum_{j=0}^k(-1)^j q^{j(j-1)/2}\qbin{k}{j}{q}\beta_j q^{-j(j-1)/2}(-1)^j=\sum_{j=0}^k \qbin{k}{j}{q}\beta_j.
\end{equation}

\noindent It is easy to see that
\[
A=\sum_{j=0}^{s}(-1)^{s-j}\alpha_j q^{(s-j)n(r-s)/s}\sigma_{s-1,s-j}
\]
and hence by Lemma \ref{Cam}
\begin{equation}
\label{kifejez2}
A=\sum_{j=0}^{s}(-1)^{s-j}\alpha_j q^{(s-j)n(r-s)/s+(s-j)(s-j-1)/2}\qbin{s}{s-j}{q}.
\end{equation}

\noindent By Lemma \ref{kif} we have
\[\beta_{k}=(q^{(r-k)n}-1)\prod_{j=0}^{k-1}(q^{rn/s}-q^j)=\]
\[(q^{(r-k)n}-1)q^{k(k-1)/2}(-1)^{k}\prod_{j=0}^{k-1}(1-q^{rn/s-j}).\]
By \eqref{identity} with $t=-q^{rn/s-k+1}$
\[\prod_{j=0}^{k-1}(1-q^{rn/s-j})=\prod_{j=0}^{k-1}(1-q^{rn/s-k+1}q^j)=
\sum_{j=0}^{k} q^{j(j-1)/2+rnj/s-(k-1)j}\qbin{k}{j}{q}(-1)^j,\]
thus

\[\beta_k=\sum_{j=0}^{k}(q^{(r-k)n}-1)q^{k(k-1)/2} q^{j(j-1)/2+rnj/s-(k-1)j}\qbin{k}{j}{q}(-1)^{j+k}=\]
\[\sum_{j=0}^{k}(q^{(r-k)n}-1)q^{(k-j)(k-j-1)/2+rnj/s}\qbin{k}{j}{q}(-1)^{j+k}=\]
\[\sum_{t=0}^{k}(q^{(r-k)n}-1)q^{t(t-1)/2+rn(k-t)/s}\qbin{k}{t}{q}(-1)^{t}.\]

\noindent Hence by \eqref{kifejez} and \eqref{kifejez2}

\[A=\sum_{k=0}^{s}\sum_{j=0}^k
\sum_{t=0}^{j}(q^{(r-j)n}-1)q^{t(t-1)/2+rn(j-t)/s+(s-k)n(r-s)/s+(s-k)(s-k-1)/2}\qbin{s}{k}{q}\qbin{k}{j}{q}\qbin{j}{t}{q}(-1)^{t+k+s}.\]

\subsection{\texorpdfstring{Proof of $A=0$}{Proof of A=0}}

Since $q$-binomial coefficients out of range are defined as zero, cf.\,\eqref{formulanuova}, it is enough to prove that the following expression is zero:

\[\sum_{k=0}^{s}\sum_{j=0}^{s}
\sum_{t=0}^{s}(q^{rn-jn}-1)q^{rnj/s-rnt/s+(s-k)n(r-s)/s+\frac{1}{2} (s-k-1) (s-k)+\frac{1}{2} (t-1)t}\qbin{s}{k}{q}\qbin{k}{j}{q}\qbin{j}{t}{q}(-1)^{t+k}.\]

\noindent It is clearly equivalent to prove $a_s=b_s$, where

\[a_s=\sum_{j=0}^{s}q^{nr-nj}\sum_{k=0}^{s}
\sum_{t=0}^{s}q^{rnj/s-rnt/s+(s-k)n(r-s)/s+\frac{1}{2} (s-k-1) (s-k)+\frac{1}{2} (t-1)
	t}\qbin{s}{k}{q}\qbin{k}{j}{q}\qbin{j}{t}{q}(-1)^{t+k},\]

\[b_s=\sum_{j=0}^{s}\sum_{k=0}^{s}
\sum_{t=0}^{s}q^{rnj/s-rnt/s+(s-k)n(r-s)/s+\frac{1}{2} (s-k-1) (s-k)+\frac{1}{2} (t-1)
	t}\qbin{s}{k}{q}\qbin{k}{j}{q}\qbin{j}{t}{q}(-1)^{t+k}.\]

\begin{proposition}
	\[a_s=q^{nr}(-1)^s (q^{-n};q)_s.\]
\end{proposition}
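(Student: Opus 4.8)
The plan is to evaluate the triple sum defining $a_s$ by performing the $k$- and $t$-summations separately for each fixed $j$. For fixed $j$ the summand factors, since $\qbin{s}{k}{q}\qbin{k}{j}{q}$ together with the $k$-dependent $q$-powers and sign involve only $k$ (and $j$), while $\qbin{j}{t}{q}$ together with the $t$-dependent factors involve only $t$ (and $j$). First I would pull the prefactor $q^{nr-nj}$ out and regroup the exponent so that the $j$-dependent part becomes $q^{nr}q^{j(rn/s-n)}$, the $k$-dependent part becomes $q^{(s-k)n(r-s)/s+\frac12(s-k-1)(s-k)}$, and the $t$-dependent part becomes $q^{-rnt/s+\frac12 t(t-1)}$, keeping the signs $(-1)^{k}$ and $(-1)^{t}$ attached to the $k$- and $t$-sums respectively.

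For the inner $k$-sum I would apply \eqref{twotoone} to write $\qbin{s}{k}{q}\qbin{k}{j}{q}=\qbin{s}{j}{q}\qbin{s-j}{k-j}{q}$, factor out $\qbin{s}{j}{q}$, and substitute $m=s-k$. Using \eqref{twotoone2} this turns $\qbin{s-j}{k-j}{q}$ into $\qbin{s-j}{m}{q}$, the sign into $(-1)^{s}(-1)^{m}$, and the $q$-power into $q^{mn(r-s)/s+\frac12 m(m-1)}$. The resulting sum $\sum_{m=0}^{s-j}q^{m(m-1)/2}\qbin{s-j}{m}{q}\bigl(-q^{n(r-s)/s}\bigr)^m$ is exactly the right-hand side of \eqref{identity} (with the indeterminate set to $-q^{n(r-s)/s}$), hence equals $\prod_{l=0}^{s-j-1}\bigl(1-q^{nr/s-n}q^{l}\bigr)=(q^{nr/s-n};q)_{s-j}$, using $n(r-s)/s=nr/s-n$. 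In the same way the inner $t$-sum $\sum_{t=0}^{j}q^{t(t-1)/2}\qbin{j}{t}{q}\bigl(-q^{-nr/s}\bigr)^t$ collapses, via \eqref{identity} with indeterminate $-q^{-nr/s}$, to $(q^{-nr/s};q)_{j}$.

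After these two reductions I would be left with
\[a_s=q^{nr}(-1)^s\sum_{j=0}^{s}q^{j(nr/s-n)}\qbin{s}{j}{q}(q^{-nr/s};q)_{j}(q^{nr/s-n};q)_{s-j},\]
and the sum over $j$ is precisely the right-hand side of \eqref{comp1}, which equals $(q^{-n};q)_s$. This yields $a_s=q^{nr}(-1)^s(q^{-n};q)_s$, as claimed. The only real difficulty is the bookkeeping: keeping the three groups of $q$-exponents and the two signs $(-1)^{k}$, $(-1)^{t}$ cleanly separated through the substitution $m=s-k$, and matching the two arguments $-q^{n(r-s)/s}$ and $-q^{-nr/s}$ of the applications of \eqref{identity} so that the surviving $j$-sum lines up verbatim with the right-hand side of \eqref{comp1}.
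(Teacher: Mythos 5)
Your proof is correct and follows essentially the same route as the paper: for fixed $j$ both inner sums collapse via \eqref{identity} (the $t$-sum to $(q^{-nr/s};q)_j$, and the $k$-sum, after \eqref{twotoone}, \eqref{twotoone2} and reindexing, to $(-1)^s(q^{nr/s-n};q)_{s-j}$), and the remaining $j$-sum is matched with \eqref{comp1}. The only cosmetic difference is the order in which the two inner sums are evaluated and your explicit substitution $m=s-k$, which the paper performs implicitly.
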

\begin{proof}
	Clearly, it is enough to prove
	\[(-1)^s(q^{-n};q)_s=\]
	\[\sum_{j=0}^{s}q^{-nj}\sum_{k=0}^{s}(-1)^k\qbin{s}{k}{q}\qbin{k}{j}{q}
	q^{nrj/s+(s-k)n(r-s)/s+\frac{1}{2} (-k+s-1) (s-k)}
	\sum_{t=0}^{s}q^{-nrt/s+\frac{1}{2} (t-1)
		t}\qbin{j}{t}{q}(-1)^{t},\]
	where by \eqref{identity}
	\[\sum_{t=0}^{s}q^{-nrt/s+\frac{1}{2} (t-1)
		t}\qbin{j}{t}{q}(-1)^{t}=(q^{-nr/s};q)_j,\]
	thus the triple sum can be reduced to
	\[\sum_{j=0}^{s}q^{nrj/s-nj}(q^{-nr/s};q)_j\sum_{k=0}^{s}(-1)^k\qbin{s}{k}{q}\qbin{k}{j}{q}
	q^{(s-k)n(r-s)/s+\frac{1}{2} (-k+s-1) (s-k)}.\]
	By \eqref{twotoone} and \eqref{twotoone2} this can be written as
	\begin{equation}\label{formnew}
	\sum_{j=0}^{s}q^{nrj/s-nj}(q^{-nr/s};q)_j\qbin{s}{j}{q}\sum_{k=0}^{s}(-1)^k\qbin{s-j}{s-k}{q}
	q^{(s-k)n(r-s)/s+\frac{1}{2} (-k+s-1) (s-k)},
	\end{equation}
	where again by \eqref{identity}
	\[\sum_{k=0}^{s}(-1)^k\qbin{s-j}{s-k}{q}
	q^{(s-k)n(r-s)/s+\frac{1}{2} (-k+s-1) (s-k)}=\]
	\[(-1)^s\sum_{z=0}^{s}(-1)^{z}\qbin{s-j}{z}{q}q^{zn(r-s)/s+z(z-1)/2}=\]
	\[(-1)^s(q^{nr/s-n};q)_{s-j}.\]
	By \eqref{formnew} we have
	\[(-1)^s\sum_{j=0}^{s}q^{j(nr/s-n)}\qbin{s}{j}{q}(q^{-nr/s};q)_j(q^{nr/s-n};q)_{s-j},\]
	which by \eqref{comp1} equals $(-1)^s(q^{-n};q)_s$.
\end{proof}

\begin{proposition}
	\[b_s=q^{nr}(-1)^s(q^{-n};q)_s.\]
\end{proposition}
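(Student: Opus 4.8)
The plan is to mirror, almost verbatim, the computation just carried out for $a_s$, exploiting the fact that $b_s$ differs from $a_s$ only by the omission of the outer factor $q^{nr-nj}$ in the sum over $j$. First I would perform the innermost sum over $t$ exactly as before: by the $q$-binomial theorem \eqref{identity} one has
\[
\sum_{t=0}^{s}q^{-nrt/s+\frac{1}{2}(t-1)t}\qbin{j}{t}{q}(-1)^{t}=(q^{-nr/s};q)_j,
\]
so the triple sum collapses to a double sum over $j$ and $k$, with the factor $(q^{-nr/s};q)_j$ depending only on $j$.

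Next I would detach the $j$-dependent binomial from the $k$-sum by rewriting, via \eqref{twotoone} and \eqref{twotoone2},
\[
\qbin{s}{k}{q}\qbin{k}{j}{q}=\qbin{s}{j}{q}\qbin{s-j}{s-k}{q},
\]
and then evaluate the remaining sum over $k$ (after the substitution $z=s-k$) by a second application of \eqref{identity}, obtaining
\[
\sum_{k=0}^{s}(-1)^k\qbin{s-j}{s-k}{q}q^{(s-k)n(r-s)/s+\frac{1}{2}(s-k-1)(s-k)}=(-1)^s(q^{nr/s-n};q)_{s-j}.
\]
This is the same pair of reductions used in the proof for $a_s$, and they apply here unchanged because the factors involved do not interact with the outer $j$-weight.

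The single point where the two computations diverge now surfaces: since $b_s$ carries no factor $q^{nr-nj}$, the surviving $j$-dependent power of $q$ is $q^{jnr/s}$ rather than the $q^{j(nr/s-n)}$ that appeared for $a_s$. Assembling the pieces yields
\[
b_s=(-1)^s\sum_{j=0}^{s}q^{jnr/s}\qbin{s}{j}{q}(q^{-nr/s};q)_j(q^{nr/s-n};q)_{s-j}.
\]
Finally I would invoke \eqref{comp2} — the companion identity obtained from \eqref{eqb}, whereas the $a_s$ proof closed through \eqref{comp1} from \eqref{eqa} — to identify this sum as $q^{nr}(q^{-n};q)_s$, giving $b_s=q^{nr}(-1)^s(q^{-n};q)_s$, as claimed. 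I do not expect any genuine obstacle: the whole argument is a mirror image of the previous proposition, and the only subtlety worth flagging is that the absence of the $q^{-nj}$ weight routes the last step through \eqref{comp2} in place of \eqref{comp1}, which is exactly why both forms of the $q$-binomial theorem were recorded beforehand.
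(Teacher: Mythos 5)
Your proposal is correct and follows exactly the paper's own route: the paper's proof simply says ``as before'' for the reduction to $(-1)^s\sum_{j=0}^{s}q^{jnr/s}\qbin{s}{j}{q}(q^{-nr/s};q)_j(q^{nr/s-n};q)_{s-j}$ (i.e.\ the $t$-sum and $k$-sum via \eqref{identity} together with \eqref{twotoone} and \eqref{twotoone2}), and then closes with \eqref{comp2}, precisely as you do. Your observation that the missing weight $q^{nr-nj}$ is what routes the final step through \eqref{comp2} rather than \eqref{comp1} is exactly the point of recording both forms of the $q$-binomial theorem.
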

\begin{proof}
	As before, $b_s$ can be written as
	\[q^{nr}(-1)^s\sum_{j=0}^{s}q^{jnr/s-nr}\qbin{s}{j}{q}(q^{-nr/s};q)_j(q^{nr/s-n};q)_{s-j}.\]
	Then the assertion follows from \eqref{comp2}.
\end{proof}

\noindent Bence Csajb\'ok\\
MTA--ELTE Geometric and Algebraic Combinatorics Research Group\\
ELTE E\"otv\"os Lor\'and University, Budapest, Hungary\\
Department of Geometry\\
1117 Budapest, P\'azm\'any P.\ stny.\ 1/C, Hungary\\
{{\em csajbokb@cs.elte.hu}}

\bigskip

\noindent Giuseppe Marino\\
Dipartimento di Matematica e Applicazioni ``Renato Caccioppoli"\\
Università degli Studi di Napoli ``Federico II",\\
Via Cintia, Monte S.Angelo I-80126 Napoli, Italy\\
{\em giuseppe.marino@unina.it}

\bigskip

\noindent Olga Polverino and Ferdinando Zullo\\
Dipartimento di Matematica e Fisica,\\
Universit\`a degli Studi della Campania ``Luigi Vanvitelli'',\\
I--\,81100 Caserta, Italy\\
{{\em olga.polverino@unicampania.it}, \\{\em ferdinando.zullo@unicampania.it}}

\end{document}